\newtheorem{thm}{Theorem}[section]
\newtheorem{cor}[thm]{Corollary}
\theoremstyle{remark}
\newtheorem{rmk}[thm]{Remark}
\theoremstyle{definition}
\newtheorem{defn}[thm]{Definition}
\newtheorem{exm}[thm]{Example}
\numberwithin{equation}{section}
\newcounter{smallromans}
\begin{document}
\title[Nonexistence results for Parabolic Problems on Weighted Graphs]{NONEXISTENCE RESULTS FOR A GENERAL
CLASS OF PARABOLIC PROBLEMS WITH A
POTENTIAL ON WEIGHTED GRAPHS}

\author[]{}
\address[]{}
\email{}

\author[D.~von~Criegern]{Dorothea-Enrica von Criegern}
\address[D.~von~Criegern]{Politecnico di Milano, Dipartimento di Matematica, Piazza Leonardo da Vinci 32, 20133 Milano, Italy}
\email{dorotheaenrica.von@polimi.it}

\keywords{Graphs, semilinear parabolic equations on graphs, Porous Medium Equation on graphs, Fast Diffusion Equation on graphs, nonexistence of global solutions, distance function on graphs, weighted volume, test functions.}
\subjclass[2020]{35A01,35A02,35B33,35K58,35K65,35R02}
\date{\today}

\begin{abstract} 
We establish nonexistence conditions for nonnegative nontrivial solutions to a class of semilinear parabolic equations with a positive potential on weighted graphs, extending results in \cite{dariopara} to a broader setting that includes both the Porous Medium Equation and the Fast Diffusion Equation. We identify conditions related to the graph’s geometry, the potential’s behaviour at infinity, and bounds on the Laplacian of the distance function under which nonexistence holds. Using a test function argument, we derive explicit parameter ranges for nonexistence.
\end{abstract}
\maketitle


\section{Introduction}
We investigate the nonexistence of nonnegative, nontrivial global solutions to the following parabolic semilinear inequality:
\begin{equation}\label{maineq}
    u_t \geq \Delta (F(u)) + v u^{\sigma} \quad \text{in } V \times (0, +\infty),
\end{equation}
where $V$ is a weighted graph, $\Delta$ denotes the weighted Laplacian on the graph (see Sections~\ref{setting} and ~\ref{lapsetting} for definitions and preliminaries), $v$ is a positive potential, and $\sigma > 1$. The function $F \colon [0, +\infty) \to [0, +\infty)$ satisfies $F(p) \leq C p^m$ for some $C > 0$, $m \in (0, +\infty)$, and all $p \in [0, +\infty)$.

A special case of \eqref{maineq} is the \textit{Generalised Porous Medium Equation} (GPME), also known as the \textit{Filtration Equation}, which arises when $F$ is strictly monotone increasing and satisfies $F(0) = 0$. When $F(u) = u^m$, the equation reduces to two well-known nonlinear generalisations of the heat equation: the \textit{Porous Medium Equation} (PME) for $m > 1$ and the \textit{Fast Diffusion Equation} (FDE) for $m \in (0,1)$. Remarkably, our results apply to both cases, even though they exhibit fundamentally different behaviours -- such as finite versus infinite speed of propagation and finite-time extinction phenomena in the FDE, see \cite{vazquez2007porous}. For investigations of the PME and FDE on Riemannian manifolds, see, e.g., \cite{fde1, grillo2025porous, fde2, meglioli2020blow}.

The (non)existence of solutions to PDEs of the form in $\eqref{maineq}$ has been extensively studied in $\mathbb{R}^N$. Fujita \cite{fujita1966blowing} analysed global existence and blow-up for
\begin{equation}\label{intropara}
    u_t \geq \Delta u + u^{\sigma} \quad \text{in } \mathbb{R}^N \times (0, +\infty).
\end{equation}
In \cite{fujita1966blowing} and later contributions \cite{hayakawa1973nonexistence, kobayashi1977growing}, it was shown that for $1 < \sigma \leq \sigma^* \coloneqq 1 + \frac{2}{N}$, no nonnegative, nontrivial global solutions exist, while for $\sigma > \sigma^*$, solutions corresponding to nonnegative initial data that are sufficiently small in a suitable sense exist globally in time. For more results on blow-up and (global) (non)existence of solutions to semilinear parabolic equations in $\mathbb{R}^N$, see, e.g.,  \cite{bandle1998blowup, deng2000role, levine1990role, pohozaev2000blow}, and references therein.

On Riemannian manifolds, the behaviour of solutions depends significantly on the geometry and volume growth of the manifold. For instance, in \cite{mastrolia2017nonexistence}, the nonexistence of nontrivial global solutions to the problem corresponding to \eqref{intropara} was established under the following weighted space-time volume growth condition for geodesic balls $B_R$:
\begin{equation}\label{volgrmanifold}
    \mathrm{Vol}(B_R)\leq CR^{\frac{2}{\sigma-1}}(\log R)^{\frac{1}{\sigma-1}}\quad \text{for all }R>0\text{ sufficiently large.}
\end{equation}
Other relevant results on Riemannian manifolds include \cite{bandle2011fujita,grigrm1,gu2020global, punzo2012blow, punzo2021global,wang2016asymptotic, zhang1999blow}.

Recently, the problem in $\eqref{intropara}$ and its elliptic counterpart on infinite, connected, locally finite graphs (see Section~\ref{setting} for a definition) have attracted a lot of attention, see, e.g., \cite{biagi2024liouville,grillo2024blow,gu2023semi,lin2017existence,yong2018blow,lin2021heat,darioelliptic,dariopara}, and references therein. In \cite{gu2023semi}, Gu, Huang and Sun studied the problem
\begin{equation}\label{ellpb}
    0 \geq \Delta u + v u^{\sigma} \quad \text{in } V,
\end{equation}
with $v \equiv 1$, assuming the existence of $p_0 > 0$ such that for all $x \sim y \in V$,
\begin{equation}\label{paper1cond}
    \frac{\omega_{xy}}{\mu(x)} \geq \frac{1}{p_0}
\end{equation}
(see Section~\ref{setting} for notation). For $\sigma > 1$, they employed a test function argument to establish nonexistence under the assumption in $\eqref{paper1cond}$ and a volume growth condition with respect to the natural distance (see $\eqref{natdist}$ for a definition), in the spirit of $\eqref{volgrmanifold}$. Similar results were obtained in \cite{hao2022sharpliouvilletyperesults} for the semilinear elliptic inequality
\begin{equation*}
\Delta u + u^p | \nabla u |^q \leq 0, \quad (p,q) \in \mathbb{R}^2.
\end{equation*}

In \cite{darioelliptic}, Monticelli, Punzo, and Somaglia studied $\eqref{ellpb}$ on weighted graphs with a positive potential $v$, removing the condition in $\eqref{paper1cond}$; instead, they assumed an upper bound on $\Delta d(\cdot,x_0)$, for some fixed $x_0 \in V$ and a general pseudometric $d$. The result in \cite{gu2023semi} permits faster volume growth of balls in the graph, whereas the technique in \cite{darioelliptic} can be directly extended to the parabolic counterpart, i.e., $\eqref{maineq}$ with $F(u) = u$. This is done in \cite{dariopara}, where nonexistence results for nontrivial nonnegative solutions were established through a priori estimates on the solution $u$, using a test function argument and growth constraints on the potential at infinity.

Further results on nonexistence, and blow-up phenomena for parabolic problems on graphs can be found in \cite{grillo2024blow,lin2017existence,yong2018blow,lin2021heat}. These works employ techniques such as the semigroup approach and heat kernel estimates, which differ entirely from those used in \cite{dariopara}.

In this paper, we extend the results of \cite{dariopara} to the broader setting in \eqref{maineq} to include nonlinear functions $F$. This extension encompasses important cases such as the PME and FDE, and provides a unified framework for analysing a broader class of nonlinear functions $F$. Additionally, we establish a nonexistence result on finite graphs for all functions $ F \colon [0,+\infty) \to \mathbb{R}$ in $\eqref{maineq}$, regardless of their specific form. 

The paper is structured as follows. In Section $2$, we introduce the mathematical framework and present the main results. Section $3$ is dedicated to the proofs of the main theorem and its corollaries. In Section $4$, we provide examples to highlight some implications of our results. Finally, Section $5$ briefly explores the case of finite graphs.

\bigskip

\noindent{\bf Acknowledgements.} The author would like to thank Dario D. Monticelli for his support and insightful discussions. The author is a member of GNAMPA (Gruppo Nazionale per l'Analisi Matematica, la Probabilità e le loro Applicazioni) of INdAM.

\section{Assumptions and Main Results}
\subsection{Graph Setting}\label{setting}
A \textit{weighted graph} $(V, \omega, \mu)$ consists of a countable set $V$, a \textit{node measure} $\mu\colon V \to (0, +\infty)$, a symmetric \textit{edge weight} function $\omega\colon V \times V \to [0, +\infty)$ satisfying: 
\begin{equation*}
\begin{aligned}
  & (i) \; \omega_{xy} = \omega_{yx} \;\;\; \text{for all} \; x, y \in V \\
  & (ii) \; \omega_{xx} = 0 \quad \text{for all} \;\;\; x \in V \; (\text{i.e., no loops}) \\
  & (iii) \; \sum_{y \in V} \omega_{xy} < \infty \;\;\; \text{for all} \; x \in V
\end{aligned}
\end{equation*}

Let $x, y \in V$, then: 
\begin{itemize}
    \item $x$ is \textit{connected} to $y$ (denoted $x \sim y$) if and only if $\omega_{xy} > 0$;  
    \item the pair $(x, y)$ is an \textit{edge} with endpoints $x, y$ if and only if $x\sim y$, and $E$ denotes the set of edges;
    \item a \textit{path} in $V$ is a sequence of vertices $\{x_k\}_{k=0}^n$ such that $x_k \sim x_{k+1}$ for $k = 0, \dots, n-1$. 
\end{itemize}

A weighted graph $(V, \omega, \mu)$ is \textit{locally finite} if each vertex has finitely many neighbours, \textit{connected} if any two distinct vertices are joined by a path, \textit{undirected} if edges have no orientation.  

The \textit{degree} of $x \in V$ is the cardinality of $\{y \in V : y \sim x\}$. A \textit{pseudo metric} on $V$ is a symmetric map $d\colon V \times V \to [0, +\infty)$ with $d(x, x) = 0$ that satisfies the triangle inequality. In general, $d$ does not satisfy the definition of a metric, as there can exist distinct nodes $x, y \in V, x\neq y$ for which $d(x, y) = 0$. The \textit{jump size} $j > 0$ of $d$ is defined as:  
\begin{equation*}  
    j \coloneqq \sup\{d(x, y) : x, y \in V, \omega_{xy} > 0\}.  
\end{equation*}  
For $\Omega \subset V$, its \textit{volume} is
\begin{equation*}
    \text{Vol}(\Omega) \coloneqq \sum_{x \in \Omega} \mu(x),
\end{equation*}
and $1_\Omega$ denotes its characteristic function.

\subsection{The Weighted Laplacian}\label{lapsetting}
Let $\mathcal{F}$ denote the set of all functions $f\colon V \to \mathbb{R}$. For any $f \in \mathcal{F}$ and $x, y \in V$, we introduce the following

\begin{defn}  
Let $(V, \omega, \mu)$ be a weighted graph. For any $f \in \mathcal{F}$, we define:  
\begin{itemize}  
    \item The \textit{difference operator}  
    \begin{equation*}  
        \nabla_{xy} f \coloneqq f(y) - f(x) \quad \text{for all } x, y \in V;  
    \end{equation*}  
    \item The \textit{(weighted) Laplace operator} 
    \begin{equation*}  
        \Delta f(x) \coloneqq \frac{1}{\mu(x)} \sum_{y \in V} \omega_{xy} [f(y) - f(x)] = \frac{1}{\mu(x)} \sum_{y \sim x} \omega_{xy} [f(y) - f(x)] \quad \text{for all } x \in V.  
    \end{equation*}  
\end{itemize}  
\end{defn}  

One can easily show that for any $f, g \in \mathcal{F}$:  
\begin{itemize}  
    \item The \textit{product rule} holds:  
    \begin{equation*}  
        \nabla_{xy} (fg) = f(x) (\nabla_{xy} g) + (\nabla_{xy} f) g(y) \quad \text{for all } x, y \in V;  
    \end{equation*}  
    \item The \textit{integration by parts formula} holds:  
    \begin{equation}\label{ip}  
        \sum_{x \in V} [\Delta f(x)] g(x) \;\mu(x) = -\frac{1}{2} \sum_{x, y \in V} \omega_{xy} (\nabla_{xy} f) (\nabla_{xy} g) = \sum_{x \in V} f(x) [\Delta g(x)] \;\mu(x),  
    \end{equation}  
    if at least one of the functions has finite support.  
\end{itemize}  
\subsection{Main Results}  
\begin{defn}  
We say that $u\colon V \times [0, +\infty) \to \mathbb{R}$ is a \textit{nonnegative global very weak solution} of \eqref{maineq} if $u \geq 0$ and  
\begin{equation}\label{defneq}  
\begin{split}  
    \int_0^{\infty} \sum_{x \in V} & \Big[\Delta(F(u(x,t))) \phi(x,t) +  v(x,t) u^{\sigma}(x,t) \phi(x,t)\\ & +  u(x,t) \phi_t(x,t)\Big] \;\mu(x)\;dt  
    + \sum_{x \in V} u(x,0) \phi(x,0) \;\mu(x) \leq 0  
\end{split}  
\end{equation}  
holds for all $\phi\colon V \times [0, +\infty) \to \mathbb{R}$ satisfying $\phi \geq 0$, $\text{supp}\, \phi \subset [0, T] \times A$ for some $T > 0$ and finite $A \subset V$, and $\phi(x, \cdot) \in W^{1,1}([0, +\infty))$ for each $x \in V$.
\end{defn}  

\begin{rmk}  
If $u(x, \cdot) \in W^{1,1}_{\text{loc}}([0, +\infty))$ for every $x \in V$, then \eqref{defneq} follows by testing \eqref{maineq} against a test function $\phi$ through integration by parts.  
\end{rmk}  
In this paper, we will always make the following 

\noindent{\underline{\textbf{Assumption} \textbf{(A)}}}  
\begin{itemize}  
    \item[(i)] $(V, \omega, \mu)$ is a connected, locally finite, undirected, weighted graph;  
    \item[(ii)] There exists a constant $C > 0$ such that for every $x \in V$, 
    \vspace{0.1in}
    \begin{equation*}  
    \hspace*{-4in} 
        \sum_{x \sim y} \omega_{xy} \leq C \mu(x);  
    \end{equation*}  
    \item[(iii)] There exists a pseudo metric $d$ with finite jump size $j$;  
    \item[(iv)]  For any $x \in V$ and $r > 0$, the ball $B_r(x)$ with respect to $d$ is finite; 
    \item[(v)] For some $x_0 \in V$, $R_0 > 1$, $\alpha \in [0, 1]$, and $C > 0$,  
    \vspace{-0.05in}
    \begin{equation*}\label{distbound} 
    \hspace*{-2in} 
        \Delta d(x, x_0) \leq \frac{C}{d^{\alpha}(x, x_0)} \quad \text{for any } x \in V \setminus B_{R_0}(x_0).  
    \end{equation*}  
\end{itemize}  

\begin{rmk} \label{natdistrmk}
If conditions (i)--(iv) in Assumption \textbf{(A)} are satisfied, then condition (v) follows directly with $\alpha = 0$; see Remark 2.4 in \cite{dariopara}. In particular, this applies to graphs satisfying (i) and (ii) that are equipped with the natural distance $d_*$, defined by
\begin{equation}\label{natdist}
    d_*(x, y) \coloneqq \min \{ k \in \mathbb{N} : \exists \text{ a path of } k+1 \text{ nodes connecting } x \text{ and } y \}  
\end{equation}  
for every $x, y \in V$. Note that the corresponding jump size is $j_* = 1$.  

When $\alpha>0$ in (v), $\Delta d(x,x_0)$ decays faster, relaxing the growth condition in Theorem~\ref{mainthm} below.
\end{rmk}  
\begin{thm}\label{mainthm}
Let Assumption \textbf{(A)} hold and assume the graph $(V, \omega, \mu)$ is infinite. Let $v\colon V \times [0, +\infty) \to \mathbb{R}$ be a positive function. Fix $\sigma > \max(1, m)$, where $m \in (0, +\infty)$ is a constant, and $\alpha \in [0, 1]$ as in \textbf{(A)}. Suppose there exists $C > 0$ such that $0\leq F(p) \leq C p^m$ for all $p\geq0$. Additionally, assume there are constants $\theta_1 \geq 2$ and $\theta_2 \geq 1$ such that for every $R \geq R_0 > 1$, the following holds:

\begin{equation}\label{hp1}
    \int_0^{\infty} \sum_{x \in V} v^{-\frac{1}{\sigma-1}}(x,t) 1_{E_R}(x,t) \;\mu(x)\;dt \leq C R^{\frac{\theta_1 \sigma}{\theta_2 (\sigma-1)}}  
\end{equation}  
and  
\begin{equation}\label{hp2}
    \int_0^{\infty} \sum_{x \in V} v^{-\frac{m}{\sigma-m}}(x,t) 1_{E_R}(x,t) \;\mu(x)\;dt \leq C R^{(1+\alpha) \frac{\sigma}{\sigma-m}},  
\end{equation}  
where  
\begin{equation}\label{er}
    E_R \coloneqq \{ (x,t) \in V \times [0, +\infty) : R^{\theta_1} \leq d(x_0, x)^{\theta_1} + t^{\theta_2} \leq 2R^{\theta_1} \},  
\end{equation}  
with $x_0$ as in \textbf{(A)}. If $u\colon V \times [0, +\infty) \to \mathbb{R}$ is a nonnegative global very weak solution of \eqref{maineq}, then $u \equiv 0$.
\end{thm}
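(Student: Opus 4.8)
The plan is to run a rescaled test-function argument of Mitidieri--Pohozaev type, adapted to the discrete setting and to the nonlinear diffusion $F$. I would start from the definition \eqref{defneq}, move the Laplacian off $F(u)$ and onto $\phi$ via the integration by parts formula \eqref{ip} (legitimate since each admissible $\phi$ has finite spatial support), and discard the nonnegative initial term $\sum_x u(x,0)\phi(x,0)\mu(x)\ge 0$. Using $0\le F(u)\le C u^m$ and $u,\phi\ge 0$, this reduces matters to the working inequality
\[
\int_0^\infty\sum_{x\in V} v\,u^\sigma\,\phi\,\mu\,dt
\;\le\; C\int_0^\infty\sum_{x\in V} u^m\,(-\Delta\phi)_+\,\mu\,dt
\;+\;\int_0^\infty\sum_{x\in V} u\,|\phi_t|\,\mu\,dt,
\]
where $(-\Delta\phi)_+$ denotes the positive part. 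The whole argument now hinges on choosing $\phi$ so that the two terms on the right are controlled by a small multiple of the left-hand side plus the geometric quantities in \eqref{hp1}--\eqref{hp2}.

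For the test function I would fix a smooth decreasing cutoff $\xi\colon[0,\infty)\to[0,1]$ with $\xi\equiv 1$ on $[0,1]$ and $\xi\equiv 0$ on $[2,\infty)$, and set $\phi_R(x,t)=\xi\!\big((d(x_0,x)^{\theta_1}+t^{\theta_2})/R^{\theta_1}\big)^\lambda$ for a large exponent $\lambda$ to be fixed. Both $\phi_t$ and $\Delta\phi$ are then supported on $E_R$ from \eqref{er}. The time derivative is elementary: $|\phi_t|\lesssim \xi^{\lambda-1}t^{\theta_2-1}R^{-\theta_1}$, which on $E_R$ (where $t\lesssim R^{\theta_1/\theta_2}$) gives the scaling $R^{-\theta_1/\theta_2}$ after raising to the power $\sigma/(\sigma-1)$ and comparing with \eqref{hp1}. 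The delicate part, and what I expect to be the main obstacle, is the upper bound on $-\Delta\phi$. Writing $\phi=g(d(\cdot,x_0))$ and expanding $g(\rho(y))-g(\rho(x))$ to second order, the summation splits into a first-order piece $g'(\rho)\,\Delta\rho$ and a second-order remainder. The remainder is controlled by $\tfrac12\,j^2\sup|g''|\cdot\mu(x)^{-1}\sum_{y\sim x}\omega_{xy}$, which is finite thanks to the finite jump size (iii) and the hypothesis (ii); it scales like $R^{-2}$ on $E_R$. The first-order piece is where assumption (v) enters: since $g$ is decreasing, $g'\le 0$, so $-g'(\rho)\,\Delta\rho=|g'(\rho)|\,\Delta\rho\le C\,|g'(\rho)|\,\rho^{-\alpha}$, using only the \emph{upper} bound $\Delta\rho\le C\rho^{-\alpha}$ (the sign of $g'$ is exactly what lets an upper bound on $\Delta\rho$ suffice). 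Counting powers, this piece scales like $R^{-(1+\alpha)}$ on $E_R$, which dominates the remainder because $\theta_1\ge 2\ge 1+\alpha$ and $\alpha\le 1$; the factors $\rho^{\theta_1-1-\alpha}$ stay bounded on $E_R$ precisely because $\theta_1-1-\alpha\ge 0$. The region $\rho\le R_0$, where (v) fails, is handled separately by the crude bound $|\Delta\rho|\le Cj$, which there yields an even smaller contribution of order $R^{-\theta_1}$.

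With these pointwise estimates in hand, I would apply Hölder's inequality to each term, splitting off a factor of $(v u^\sigma\phi)^{\cdot}$: exponents $\sigma/m$ and $\sigma/(\sigma-m)$ for the diffusion term (this uses $\sigma>m$) and exponents $\sigma$ and $\sigma/(\sigma-1)$ for the time term (this uses $\sigma>1$). Choosing $\lambda$ large makes the residual powers of $\xi$ nonnegative, so that the dual factors reduce exactly to $R^{-(1+\alpha)\sigma/(\sigma-m)}\int v^{-m/(\sigma-m)}1_{E_R}$ and $R^{-\theta_1\sigma/(\theta_2(\sigma-1))}\int v^{-1/(\sigma-1)}1_{E_R}$, which are bounded by constants by \eqref{hp2} and \eqref{hp1} respectively. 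Setting $Y_R:=\int_0^\infty\sum_x v u^\sigma\phi_R\,\mu\,dt$, this yields
\[
Y_R\;\le\; C\Big[\big(\textstyle\int_{E_R} v u^\sigma\phi_R\big)^{m/\sigma}+\big(\int_{E_R} v u^\sigma\phi_R\big)^{1/\sigma}\Big].
\]
Since both exponents are strictly less than $1$ and $\int_{E_R}v u^\sigma\phi_R\le Y_R$, this forces $Y_R\le C$ uniformly in $R$, and monotone convergence ($\phi_R\uparrow 1$) gives $v u^\sigma\in L^1(V\times(0,\infty))$. Finally I would feed this integrability back into the displayed inequality: for each fixed $(x,t)$ one has $1_{E_R}(x,t)\to 0$ as $R\to\infty$, so dominated convergence makes $\int_{E_R}v u^\sigma\phi_R\to 0$, whence the right-hand side, and therefore $Y_R\to\int_0^\infty\sum_x v u^\sigma\,\mu\,dt$, vanishes. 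As $v>0$, this gives $u\equiv 0$. The only genuinely graph-specific difficulty is the second step, namely producing the sharp $R^{-(1+\alpha)}$ bound on $-\Delta\phi$ from the one-sided Laplacian estimate (v), the jump size, and (ii); once that is in place the remaining steps are the standard nonlinear-capacity bookkeeping.
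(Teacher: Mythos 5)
Your proposal follows the paper's argument almost line for line (same test function $\phi(\psi_R)^s$ with $\psi_R=(d(x_0,\cdot)^{\theta_1}+t^{\theta_2})/R^{\theta_1}$, same Taylor-expansion bound on $-\Delta\phi_R$ exploiting the sign of the cutoff's derivative so that only the one-sided bound in (v) is needed, same jump-size control of the second-order remainder, same Young/H\"older two-pass scheme), but it contains one genuine slip: the claim that $\Delta\phi_R$ is supported on $E_R$. On a graph this is false. At a point $x$ with $\psi_R(x,t)<1$, so $\phi_R(x,t)=1$, a neighbour $y$ with $d(x_0,y)\le d(x_0,x)+j$ may already lie in the transition zone, so $-\Delta\phi_R(x,t)=\frac{1}{\mu(x)}\sum_{y\sim x}\omega_{xy}\bigl[\phi_R(x,t)-\phi_R(y,t)\bigr]>0$ strictly \emph{inside} the ball; the correct statement, see \eqref{est1}, is that $-\Delta\phi_R$ is supported in the enlarged annulus $F_R$ with inner radius $R/2$ and outer radius $4R$ (for $R\ge 2j$). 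This is not cosmetic, because the hypotheses \eqref{hp1}--\eqref{hp2} are formulated \emph{only} for the sets $E_R$ of \eqref{er}: your assertion that the dual H\"older factor for the diffusion term ``reduces exactly to'' $R^{-(1+\alpha)\sigma/(\sigma-m)}\int v^{-m/(\sigma-m)}1_{E_R}$ invokes \eqref{hp2} on a set it does not cover, so as written the uniform bound $Y_R\le C$ does not follow from your hypotheses.

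The repair is exactly the paper's covering step \eqref{inclusion}: there is $l\in\mathbb{N}$, independent of $R$, with $F_R\subset\bigcup_{k=0}^{l}E_{2^{k/\theta_1-1}R}$, so applying \eqref{hp2} to each rescaled annulus and summing the finitely many contributions $C\bigl(2^{k/\theta_1-1}R\bigr)^{(1+\alpha)\frac{\sigma}{\sigma-m}}$ recovers the bound $CR^{(1+\alpha)\frac{\sigma}{\sigma-m}}$ your bookkeeping needs; the same substitution $1_{E_R}\mapsto 1_{F_R}$ must be made in the tail factor of the diffusion term in your final pass, which is harmless since $1_{F_R}(x,t)\to 0$ pointwise and $vu^\sigma\in L^1$ by the first pass. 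With that insertion your argument closes and is the paper's proof: your time-derivative estimate (genuinely supported on $E_R$), your $I_1+I_2+I_3$-type decomposition of $-\Delta\phi_R$ including the crude treatment of the region $d(x_0,x)\le R_0$, and your algebraic self-improvement $Y_R\le C\bigl(Y_R^{m/\sigma}+Y_R^{1/\sigma}\bigr)$ (an equivalent packaging of the paper's absorption of $\tfrac14 Y_R$ via Young's inequality, both implicitly using $Y_R<\infty$, which holds since $\phi_R$ has support in a finite space-time box by (iv)) all match the published argument.
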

We now discuss some consequences of Theorem~\ref{mainthm}, where the potential $v$ takes a special form.
\begin{cor}\label{cor1}
Let Assumption \textbf{(A)} hold and assume that the graph $(V, \omega, \mu)$ is infinite. Let $v\colon V \times [0, +\infty) \to \mathbb{R}$ be a positive function such that $v(x,t) \geq f(t) g(x)$ for every $(x,t) \in V \times [0, +\infty)$, where $f$ and $g$ are positive functions. Let $\sigma > \max(1,m)$, where $m \in (0, +\infty)$ is fixed. Suppose there exists a constant $C > 0$ such that $0\leq F(p) \leq C p^m$ for all $p\geq0$. Further assume that for every $R \geq R_0 > 1$, $T \geq T_0 > 0$, and $x_0$ as in \textbf{(A)},
\begin{equation}\label{cor1a1}  
    \int_0^T f^{-\frac{1}{\sigma-1}}(t)  \;dt  \leq C T^{\delta_1} \quad \text{and} \quad \sum_{x \in B_R(x_0)}  g^{-\frac{1}{\sigma-1}}(x) \;\mu(x)\leq C R^{\delta_2},  
\end{equation}  
and  
\begin{equation}\label{cor1a2}  
    \int_0^T f^{-\frac{m}{\sigma-m}}(t) \;dt  \leq C T^{\delta_3} \quad \text{and} \quad \sum_{x \in B_R(x_0)} g^{-\frac{m}{\sigma-m}}(x) \;\mu(x) \leq C R^{\delta_4},  
\end{equation}  
where $\delta_1, \delta_2, \delta_3, \delta_4 \geq 0$ satisfy, with $\alpha$ as in \textbf{(A)}:
\begin{itemize}
    \item[(i)] $0 \leq \delta_1 \leq \frac{\sigma}{\sigma-1}$ and $0 \leq \delta_4 \leq (1+\alpha)\frac{\sigma}{\sigma-m}$;
    \item[(ii)] $\delta_1 = \frac{\sigma}{\sigma-1} \implies \delta_2 = 0$; $\delta_4 = (1+\alpha)\frac{\sigma}{\sigma-m} \implies \delta_3 = 0$;
    \item[(iii)] $\delta_2, \delta_3 \neq 0 \implies 0 \leq \delta_2\delta_3 \leq \left(\frac{\sigma}{\sigma-1} - \delta_1\right)\left((1+\alpha)\frac{\sigma}{\sigma-m} - \delta_4\right)$.
\end{itemize}
If $u\colon V \times [0, +\infty) \to \mathbb{R}$ is a nonnegative global very weak solution of $\eqref{maineq}$, then $u \equiv 0$.
\end{cor}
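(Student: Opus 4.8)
The plan is to deduce Corollary~\ref{cor1} from Theorem~\ref{mainthm} by showing that the separated lower bound on $v$, together with the growth conditions \eqref{cor1a1}--\eqref{cor1a2}, forces the abstract hypotheses \eqref{hp1}--\eqref{hp2} to hold for a suitable choice of $\theta_1 \ge 2$ and $\theta_2 \ge 1$. First I would record that, since $\sigma > \max(1,m)$, both exponents $\tfrac{1}{\sigma-1}$ and $\tfrac{m}{\sigma-m}$ are positive; hence from $v(x,t) \ge f(t)g(x) > 0$ one gets the pointwise factorisations $v^{-\frac{1}{\sigma-1}} \le f^{-\frac{1}{\sigma-1}}(t)\,g^{-\frac{1}{\sigma-1}}(x)$ and $v^{-\frac{m}{\sigma-m}} \le f^{-\frac{m}{\sigma-m}}(t)\,g^{-\frac{m}{\sigma-m}}(x)$. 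Next I would exploit the geometry of the annular set \eqref{er}: the upper constraint $d(x_0,x)^{\theta_1}+t^{\theta_2} \le 2R^{\theta_1}$ yields $d(x_0,x) \le 2^{1/\theta_1}R$ and $t \le 2^{1/\theta_2}R^{\theta_1/\theta_2}$, so that $1_{E_R}(x,t) \le 1_{B_{cR}(x_0)}(x)\,1_{[0,\,c'R^{\theta_1/\theta_2}]}(t)$ with $c=2^{1/\theta_1}$ and $c'=2^{1/\theta_2}$.

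Using these two observations, the space--time integral in \eqref{hp1} factorises as a product of $\int_0^{c'R^{\theta_1/\theta_2}} f^{-\frac{1}{\sigma-1}}(t)\,dt$ and $\sum_{x \in B_{cR}(x_0)} g^{-\frac{1}{\sigma-1}}(x)\mu(x)$, and similarly for \eqref{hp2}. Applying \eqref{cor1a1} with $T=c'R^{\theta_1/\theta_2}$ and ball radius $cR$ (and \eqref{cor1a2} for the second pair), and absorbing the constants $c,c'$, I would bound the left-hand side of \eqref{hp1} by $CR^{\rho\delta_1+\delta_2}$ and that of \eqref{hp2} by $CR^{\rho\delta_3+\delta_4}$, where $\rho \coloneqq \theta_1/\theta_2$. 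Comparing with the target exponents $\frac{\theta_1\sigma}{\theta_2(\sigma-1)}=\rho\frac{\sigma}{\sigma-1}$ and $(1+\alpha)\frac{\sigma}{\sigma-m}$, the whole problem reduces to finding $\rho>0$ with
\begin{equation*}
    \rho\Big(\tfrac{\sigma}{\sigma-1}-\delta_1\Big) \ge \delta_2 \qquad\text{and}\qquad \rho\,\delta_3 \le (1+\alpha)\tfrac{\sigma}{\sigma-m}-\delta_4.
\end{equation*}
Since any $\rho>0$ is realised by admissible $(\theta_1,\theta_2)$ (e.g.\ $\theta_2=\max\{1,2/\rho\}$, $\theta_1=\rho\theta_2\ge 2$), it suffices to produce one feasible $\rho$.

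Writing $A\coloneqq \frac{\sigma}{\sigma-1}-\delta_1 \ge 0$ and $B\coloneqq(1+\alpha)\frac{\sigma}{\sigma-m}-\delta_4 \ge 0$ (both nonnegative by (i)), the feasibility of $\rho$ is a short case analysis that is exactly engineered by hypotheses (i)--(iii). If $\delta_3=0$ the second inequality is automatic and, when $A>0$, a large $\rho$ handles the first; the borderline $A=0$ forces $\delta_1=\frac{\sigma}{\sigma-1}$, whence $\delta_2=0$ by (ii) and any $\rho$ works. Symmetrically, if $\delta_2=0$ a small $\rho$ suffices, the case $B=0$ being ruled in by (ii). In the principal case $\delta_2,\delta_3\neq 0$ one needs $\frac{\delta_2}{A}\le \rho\le \frac{B}{\delta_3}$, so a feasible $\rho$ exists precisely when $\delta_2\delta_3 \le AB$, which is condition (iii). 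Having secured $\rho$ and hence $(\theta_1,\theta_2)$, hypotheses \eqref{hp1}--\eqref{hp2} hold and Theorem~\ref{mainthm} yields $u\equiv 0$. I expect the only genuine obstacle to be this feasibility bookkeeping — correctly handling the degenerate endpoints $A=0$, $B=0$, $\delta_2=0$, $\delta_3=0$ so that conditions (ii)--(iii) cover every configuration; the factorisation and the power-of-$R$ estimates are routine.
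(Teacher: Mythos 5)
Your proposal is correct and follows essentially the same route as the paper: factorise $v^{-\frac{1}{\sigma-1}}$ and $v^{-\frac{m}{\sigma-m}}$ via $v \geq fg$, bound $1_{E_R}$ by the product of indicators of $B_{2^{1/\theta_1}R}(x_0)$ and $[0, 2^{1/\theta_2}R^{\theta_1/\theta_2}]$, and reduce \eqref{hp1}--\eqref{hp2} to the exponent inequalities $\frac{\theta_1}{\theta_2}\delta_1+\delta_2 \leq \frac{\theta_1}{\theta_2}\frac{\sigma}{\sigma-1}$ and $\frac{\theta_1}{\theta_2}\delta_3+\delta_4 \leq (1+\alpha)\frac{\sigma}{\sigma-m}$ before invoking Theorem~\ref{mainthm}. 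Your explicit feasibility analysis in $\rho = \theta_1/\theta_2$ (including the degenerate endpoints handled by (ii) and the interval condition $\delta_2\delta_3 \leq AB$ from (iii)) correctly fills in the step the paper dispatches with ``we can choose $\theta_1 \geq 2$, $\theta_2 \geq 1$.''
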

\begin{cor}\label{cor2}
Let Assumption \textbf{(A)} hold and assume that the graph $(V, \omega, \mu)$ is infinite. Let $v\colon V \times [0, +\infty) \to \mathbb{R}$ be a positive function such that $v(x,t) \geq g(x)$ for every $(x,t) \in V \times [0, +\infty)$, where $g$ is a positive function. Let $\sigma > \max(1,m)$, where $m \in (0, +\infty)$ is fixed. Suppose there exists a constant $C > 0$ such that $0\leq F(p) \leq C p^m$ for all $p\geq0$. Further assume that for every $R \geq R_0 > 1$ and $x_0$ as in \textbf{(A)}, 
\begin{equation*}
    \sum_{x \in B_R(x_0)}  g^{-\frac{1}{\sigma-1}}(x) \;\mu(x)\leq C R^{\delta} \quad \text{and} \quad \sum_{x \in B_R(x_0)}  g^{-\frac{m}{\sigma-m}}(x) \;\mu(x)\leq C R^{\delta'},  
\end{equation*}  
where $\delta,\delta'\geq0$ satisfy, with $\alpha$ as in \textbf{(A)}:
\begin{equation*}
    0\leq \delta(\sigma-1)+\delta'\leq(1+\alpha) \frac{\sigma}{\sigma-m}.
\end{equation*}
If $u\colon V \times [0, +\infty) \to \mathbb{R}$ is a nonnegative global very weak solution of \eqref{maineq}, then $u \equiv 0$.  
\end{cor}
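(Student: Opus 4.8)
The plan is to deduce Corollary~\ref{cor2} from Theorem~\ref{mainthm} (equivalently, from Corollary~\ref{cor1} specialized to $f\equiv 1$) by verifying the two growth hypotheses \eqref{hp1} and \eqref{hp2} for a suitable choice of the exponents $\theta_1\geq 2$ and $\theta_2\geq 1$. The starting observation is that the potential obeys the \emph{time-independent} lower bound $v(x,t)\geq g(x)>0$, and that both exponents $-\tfrac{1}{\sigma-1}$ and $-\tfrac{m}{\sigma-m}$ are negative, since $\sigma>\max(1,m)$ gives $\sigma-1>0$ and $\sigma-m>0$. By monotonicity of $p\mapsto p^{-s}$ for $s>0$ this yields the pointwise bounds $v^{-1/(\sigma-1)}(x,t)\leq g^{-1/(\sigma-1)}(x)$ and $v^{-m/(\sigma-m)}(x,t)\leq g^{-m/(\sigma-m)}(x)$, so the integrands in \eqref{hp1}--\eqref{hp2} are controlled by weights that do not depend on $t$.

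Next I would exploit this $t$-independence to separate variables over the region $E_R$ by Tonelli's theorem. For fixed $x$, writing $a=d(x_0,x)^{\theta_1}$, the time slice $\{t\geq 0:(x,t)\in E_R\}$ is an interval contained in $[0,(2R^{\theta_1})^{1/\theta_2}]$, whence $\int_0^\infty 1_{E_R}(x,t)\,dt\leq 2^{1/\theta_2}R^{\theta_1/\theta_2}$; moreover this slice is empty unless $d(x_0,x)\leq 2^{1/\theta_1}R$, i.e. $x\in B_{2^{1/\theta_1}R}(x_0)$, a ball of radius comparable to $R$ (and $\geq R_0$). Factoring the spatial sum out of the time integral and inserting the volume-type bounds on $g$ then gives
\begin{equation*}
\int_0^\infty \sum_{x\in V} g^{-\frac{1}{\sigma-1}}(x)\,1_{E_R}(x,t)\,\mu(x)\,dt
= \sum_{x\in V} g^{-\frac{1}{\sigma-1}}(x)\,\mu(x)\int_0^\infty 1_{E_R}(x,t)\,dt
\leq C\,R^{\frac{\theta_1}{\theta_2}+\delta},
\end{equation*}
together with the analogous estimate, with exponent $\tfrac{\theta_1}{\theta_2}+\delta'$, for the weight $g^{-m/(\sigma-m)}$.

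Comparing these with the target exponents $\tfrac{\theta_1\sigma}{\theta_2(\sigma-1)}$ and $(1+\alpha)\tfrac{\sigma}{\sigma-m}$ of \eqref{hp1}--\eqref{hp2}, and using $\tfrac{\theta_1\sigma}{\theta_2(\sigma-1)}-\tfrac{\theta_1}{\theta_2}=\tfrac{\theta_1}{\theta_2(\sigma-1)}$, both hypotheses of Theorem~\ref{mainthm} hold as soon as the ratio $s\coloneqq\theta_1/\theta_2$ satisfies
\begin{equation*}
\delta(\sigma-1)\;\leq\; s \coloneqq \frac{\theta_1}{\theta_2}\;\leq\;(1+\alpha)\frac{\sigma}{\sigma-m}-\delta'.
\end{equation*}
Since every positive real is realizable as $\theta_1/\theta_2$ with $\theta_1\geq 2,\ \theta_2\geq 1$ (take $\theta_2=1,\ \theta_1=s$ when $s\geq 2$, and $\theta_1=2,\ \theta_2=2/s$ when $0<s<2$), such an $s>0$ exists precisely when $\delta(\sigma-1)+\delta'\leq(1+\alpha)\tfrac{\sigma}{\sigma-m}$ — the standing hypothesis. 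Fixing any admissible $\theta_1,\theta_2$ and applying Theorem~\ref{mainthm} then forces $u\equiv 0$.

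The step I expect to be most delicate is the endpoint analysis of the last display, i.e. checking that the admissible interval $\bigl[\delta(\sigma-1),\,(1+\alpha)\tfrac{\sigma}{\sigma-m}-\delta'\bigr]$ contains a \emph{strictly positive} value attainable as $\theta_1/\theta_2$. Outside the degenerate corner this is immediate: if $\delta>0$ one may simply take $s=\delta(\sigma-1)>0$, while if $\delta=0$ condition \eqref{hp1} holds for \emph{every} $\theta_1,\theta_2$ (the constraint $\delta(\sigma-1)\leq s$ becomes $0\leq s$) and one picks any small $s\in\bigl(0,(1+\alpha)\tfrac{\sigma}{\sigma-m}-\delta'\bigr]$. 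The single problematic case is $\delta=0$ together with $\delta'=(1+\alpha)\tfrac{\sigma}{\sigma-m}$, where the interval collapses to $\{0\}$ and no strictly positive $s$ is available; this borderline configuration — which is exactly the point that Corollary~\ref{cor1}(ii) excludes when $f\equiv 1$ forces $\delta_3=1$ — is the crux and must be treated separately, either by a sharper slice estimate for $E_R$ that improves the temporal factor away from $x_0$, or by a limiting argument in the exponent.
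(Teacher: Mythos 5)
Your proposal is correct and is, in substance, the paper's own argument. The paper proves this corollary as an ``immediate consequence'' of Corollary~\ref{cor1} with $f\equiv 1$ and $\delta_1=\delta_3=1$, and unwinding the proof of Corollary~\ref{cor1} in that special case gives exactly your computation: the inclusion $E_R\subset B_{2^{1/\theta_1}R}(x_0)\times\bigl[0,2^{1/\theta_2}R^{\theta_1/\theta_2}\bigr]$, separation of the (time-independent) weight from the time slice of length $\lesssim R^{\theta_1/\theta_2}$, and the reduction of \eqref{hp1}--\eqref{hp2} to finding $s=\theta_1/\theta_2$ with $\delta(\sigma-1)\leq s\leq(1+\alpha)\frac{\sigma}{\sigma-m}-\delta'$, every positive $s$ being realizable with $\theta_1\geq 2$, $\theta_2\geq 1$ exactly as you note. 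Your exponent algebra and the pointwise bound $v^{-1/(\sigma-1)}\leq g^{-1/(\sigma-1)}$, $v^{-m/(\sigma-m)}\leq g^{-m/(\sigma-m)}$ are all in order.

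Regarding the endpoint you flag, $\delta=0$ together with $\delta'=(1+\alpha)\frac{\sigma}{\sigma-m}$: your caution is warranted, but it is not a defect of your write-up relative to the paper, because the paper's own reduction breaks down at the same corner. With $f\equiv 1$ one is forced to take $\delta_3=1\neq 0$ (since $\int_0^T dt=T$ admits no smaller exponent), while condition (ii) of Corollary~\ref{cor1} requires $\delta_3=0$ whenever $\delta_4=(1+\alpha)\frac{\sigma}{\sigma-m}$; equivalently, in the proof of Corollary~\ref{cor1} the constraint \eqref{cor1pf2} becomes $s\leq 0$, which no admissible $\theta_1,\theta_2$ satisfies. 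So this borderline configuration, which the hypothesis $0\leq\delta(\sigma-1)+\delta'\leq(1+\alpha)\frac{\sigma}{\sigma-m}$ formally allows, is covered neither by your direct argument nor by the paper's chain of corollaries; away from it, your verification is complete, and at it you have correctly identified that a genuinely sharper estimate (or a strict inequality in the hypothesis) would be needed.
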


\begin{cor}\label{cor3}
Let Assumption \textbf{(A)} hold and assume that the graph $(V, \omega, \mu)$ is infinite. Let $v\colon V \times [0, +\infty) \to \mathbb{R}$ be a positive function such that $v(x,t) \geq C'$ for some constant $C'>0$ and every $(x,t) \in V \times [0, +\infty)$. Let $\sigma > \max(1,m)$, where $m \in (0, +\infty)$ is fixed. Suppose there exists a constant $C > 0$ such that $0\leq F(p) \leq C p^m$ for all $p\geq0$. Further assume that for every $R \geq R_0 > 1$, and $\alpha,x_0$ as in \textbf{(A)},   
\begin{equation*}  
    \text{Vol}(B_R(x_0)) \leq C R^{\frac{1+\alpha}{\sigma-m}},  
\end{equation*}  
If $u\colon V \times [0, +\infty) \to \mathbb{R}$ is a nonnegative global very weak solution of \eqref{maineq}, then $u \equiv 0$. 
\end{cor}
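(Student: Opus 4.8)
The plan is to obtain Corollary~\ref{cor3} as the special case of Corollary~\ref{cor2} in which the spatial weight $g$ is constant. Since $v(x,t)\geq C'$ for every $(x,t)\in V\times[0,+\infty)$, I would apply Corollary~\ref{cor2} with $g\equiv C'$ and show that its two summability hypotheses collapse onto the single volume-growth assumption of the present statement.

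Because $g$ is the constant $C'$, both sums appearing in Corollary~\ref{cor2} reduce to a fixed multiple of the volume of the ball:
\begin{equation*}
    \sum_{x\in B_R(x_0)} g^{-\frac{1}{\sigma-1}}(x)\,\mu(x) = (C')^{-\frac{1}{\sigma-1}}\,\mathrm{Vol}(B_R(x_0)), \qquad \sum_{x\in B_R(x_0)} g^{-\frac{m}{\sigma-m}}(x)\,\mu(x) = (C')^{-\frac{m}{\sigma-m}}\,\mathrm{Vol}(B_R(x_0)).
\end{equation*}
Invoking the hypothesis $\mathrm{Vol}(B_R(x_0))\leq C R^{\frac{1+\alpha}{\sigma-m}}$ and absorbing the fixed constants $(C')^{-1/(\sigma-1)}$ and $(C')^{-m/(\sigma-m)}$ into $C$, both sums are bounded by a constant times $R^{\frac{1+\alpha}{\sigma-m}}$ for all $R\geq R_0$. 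Hence the two exponents in Corollary~\ref{cor2} may both be taken to be $\delta=\delta'=\frac{1+\alpha}{\sigma-m}$.

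The only remaining point is to verify the admissibility constraint $0\leq \delta(\sigma-1)+\delta'\leq(1+\alpha)\frac{\sigma}{\sigma-m}$ of Corollary~\ref{cor2}. A direct computation gives
\begin{equation*}
    \delta(\sigma-1)+\delta' = \frac{1+\alpha}{\sigma-m}(\sigma-1)+\frac{1+\alpha}{\sigma-m} = \frac{1+\alpha}{\sigma-m}\bigl[(\sigma-1)+1\bigr] = \frac{(1+\alpha)\sigma}{\sigma-m} = (1+\alpha)\frac{\sigma}{\sigma-m},
\end{equation*}
so the constraint holds, with the upper bound attained exactly. Corollary~\ref{cor2} then forces $u\equiv 0$, which is precisely the desired conclusion.

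Since Corollary~\ref{cor3} is a pure specialization, there is no genuine analytic obstacle; the one point that deserves attention is the algebraic identity above, which shows that the constant choice $\delta=\delta'=\frac{1+\alpha}{\sigma-m}$ saturates the admissible region of Corollary~\ref{cor2} and thereby confirms that $\frac{1+\alpha}{\sigma-m}$ is exactly the borderline volume exponent. One could alternatively apply Theorem~\ref{mainthm} directly, taking $g\equiv C'$, $f\equiv 1$ and bounding the space-time measure of $E_R$ via $\int_0^{\infty}\sum_{x\in V}1_{E_R}(x,t)\,\mu(x)\,dt\leq \mathrm{Vol}(B_{2^{1/\theta_1}R}(x_0))\cdot 2^{1/\theta_2}R^{\theta_1/\theta_2}$; but routing through Corollary~\ref{cor2} avoids all bookkeeping of the free parameters $\theta_1,\theta_2$.
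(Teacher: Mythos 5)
Your proof is correct and follows essentially the same route as the paper, which likewise obtains Corollary~\ref{cor3} by specializing Corollary~\ref{cor2} to the constant weight $g\equiv C'$. In fact your explicit choice $\delta=\delta'=\frac{1+\alpha}{\sigma-m}$, which saturates the constraint $\delta(\sigma-1)+\delta'\leq(1+\alpha)\frac{\sigma}{\sigma-m}$, is the right one: the paper's one-line proof states ``$\delta=\delta'=1$,'' which appears to be a slip, since that choice is compatible with the volume hypothesis and the constraint of Corollary~\ref{cor2} only in the borderline case $\sigma=m+1+\alpha$, so your version is the more careful rendering of the same argument.
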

\section{Proofs of the Main Results}

\begin{proof}[Proof of Theorem~\ref{mainthm}] 
Let $\phi \in \mathcal{C}^2([0, +\infty))$ satisfy  
\begin{equation*}  
    \phi = \begin{cases}  
        1 & \text{in } [0, 1], \\  
        0 & \text{in } [2, +\infty).  
    \end{cases}  
\end{equation*}  
For $x \in V$, $t \in [0, +\infty)$, and $R \geq R_0$ (with $R_0$ from Assumption \textbf{(A)}) define  
\begin{equation*}  
    \psi_R(x,t) \coloneqq \frac{d(x, x_0)^{\theta_1}+t^{\theta_2} }{R^{\theta_1}}  \;\;\;\text{and}\;\;\;
    \phi_R(x,t) \coloneqq \phi(\psi_R(x,t)).  
\end{equation*}  
We will use the following upper bounds:  
\begin{itemize}  
    \item[(i)] There exists $C \geq 0$ such that for every $x \in V$ and $t \in [0, +\infty)$,  
    \begin{equation}\label{est1}
        -\Delta \phi_R(x,t) \leq \frac{C}{R^{1+\alpha}} 1_{F_R}(x,t),  
    \end{equation}  
    where  
    \begin{equation*}
        F_R \coloneqq \{ (x,t) \in V \times [0, +\infty) : (R/2)^{\theta_1} \leq d(x_0, x)^{\theta_1} + t^{\theta_2} \leq (4R)^{\theta_1} \}.  
    \end{equation*}  

    \item[(ii)] There exists $C \geq 0$ such that for every $x \in V$ and $t \in [0, +\infty)$,  
    \begin{equation}  \label{est2}
        -\partial_t \phi_R(x,t) \leq \frac{C}{R^{\frac{\theta_1}{\theta_2}}} 1_{E_R}(x,t).  
    \end{equation}  
\end{itemize}  
For a detailed proof, see \cite{dariopara}. Here, we provide only a brief summary for the reader's convenience. We first consider (i). Let $x \in V$, $t \geq 0$. By a Taylor expansion for $\phi$, for every $y \sim x$, there exists some $\xi_y$ between $\psi_R(x,t)$ and $\psi_R(y,t)$ such that  
    \begin{equation*}  
    \begin{split}  
        -\Delta \phi_R(x,t) =& -\frac{1}{\mu(x)} \phi'(\psi_R(x,t)) \sum_{y \sim x} \omega_{xy} \left( \frac{d(x_0, y)^{\theta_1} - d(x_0, x)^{\theta_1}}{R^{\theta_1}} \right) \\  
        &- \frac{1}{2\mu(x)} \sum_{y \sim x} \omega_{xy} \phi''(\xi_y) \left( \frac{d(x_0, y)^{\theta_1} - d(x_0, x)^{\theta_1}}{R^{\theta_1}} \right)^2 \\  
    \end{split}  
    \end{equation*}  
By two more Taylor expansions for $p \mapsto p^{\theta_1}$ of first and second order, for every $y \sim x$, there exist $\eta_y, \lambda_y$ between $d(x_0, x)$ and $d(x_0, y)$ such that  
    \begin{equation*}  
    \begin{split}  
        -\Delta \phi_R(x,t) =& -\frac{\theta_1 d(x_0, x)^{\theta_1 - 1} \phi'(\psi_R(x,t))}{R^{\theta_1}} \Delta d(x_0, x) \\  
        &- \frac{\theta_1 (\theta_1 - 1) \phi'(\psi_R(x,t))}{2R^{\theta_1}} \frac{1}{\mu(x)} \sum_{y \sim x} \omega_{xy} \eta_y^{\theta_1 - 2} (d(x_0, y) - d(x_0, x))^2 \\  
        &- \frac{1}{2\mu(x)} \sum_{y \sim x} \omega_{xy} \phi''(\xi_y) \left( \frac{\theta_1 \lambda_y^{\theta_1 - 1} (d(x_0, y) - d(x_0, x))}{R^{\theta_1}} \right)^2 \\  
        &= \colon I_1 + I_2 + I_3.  
    \end{split}  
    \end{equation*}  
Using the properties of $\phi$ -- namely, that $\phi$ is non-increasing, $\phi'$ is bounded, and $\phi'(\psi_R(x,t))$ vanishes on $E_R^c$ -- along with Assumption \textbf{(A)}, the condition $\theta_1 \geq 2 \geq 1 + \alpha$, and the fact that $(x, t) \in E_R$ implies $d(x_0, x) \leq 2^{\frac{1}{\theta_1}} R$, we obtain the following estimate:
    \begin{equation*}  
        I_1 \leq \frac{C}{R^{1 + \alpha}} 1_{E_R}(x,t).  
    \end{equation*}  
    For $I_2$, note that for every $y \sim x$,  
    \begin{equation}\label{lambda.eta}  
        0 \leq \eta_y, \lambda_y \leq \max\{d(x_0, y), d(x_0, x)\} \leq d(x_0, x) + j.  
    \end{equation}  
    Then, using \textbf{(A)} and the properties of $\phi$ mentioned above, we have for $R>j$  
    \begin{equation*}  
        I_2 \leq \frac{C}{R^2} 1_{E_R}(x,t).  
    \end{equation*}  
   Finally, for $I_3$, it was shown in \cite{dariopara} that for $R \geq 2j$, $\phi''(\xi_y) = 0$ for all $y \sim x$ if $(x,t) \in F_R^c$. By Assumption \textbf{(A)}, \eqref{lambda.eta}, and the fact that $d(x_0, x) + j \leq 6R$ for $(x,t) \in F_R$ and $R \geq 2j$, we conclude for $R \geq 2j$ that
    \begin{equation*}  
        I_3 \leq \frac{C}{R^2} 1_{F_R}(x,t).  
    \end{equation*}  
    Combining the estimates for $I_1$, $I_2$, and $I_3$, we arrive at the claim since $E_R \subset F_R$ and $1 + \alpha \leq 2$. For the proof of (ii), the chain rule and the fact that $\phi'(\psi_R(x,t)) = 0$ on $E_R^c$ yield the estimate.

Let $u$ be a nonnegative global very weak solution of \eqref{maineq}. Observe that $\phi_R(x,t)$ is an admissible test function since its support is contained in  
\begin{equation*}  
   Q_R \coloneqq B_{2^{\frac{1}{\theta_1}} R}(x_0) \times [0, 2^{\frac{1}{\theta_2}} R^{\frac{\theta_1}{\theta_2}}],  
\end{equation*}  
so that all sums for $x \in V$ are finite, and all integrals in the time variable are over compact domains. Additionally, $0 \leq \phi_R \leq 1$ in $V \times [0, +\infty)$, and $\phi(x, \cdot) \in \mathcal{C}^1([0, +\infty))$ for all $x \in V$. Hence, we can test $u$ against $\phi_R^s$ with $s>\max\left(\frac{\sigma}{\sigma-1},\frac{\sigma}{\sigma-m}\right)$, i.e., by \eqref{defneq},  
\begin{equation*}  
\begin{split}  
    \int_0^{\infty} \sum_{x \in V} & \Big[ \Delta(F(u(x,t))) \phi_R^s(x,t) +  v(x,t) u^{\sigma}(x,t) \phi_R^s(x,t) \\  
    &+  u(x,t) \partial_t(\phi^s_R(x,t)) \Big]\;\mu(x)\;dt + \sum_{x \in V}  u(x,0) \phi_R^s(x,0) \;\mu(x) \leq 0.  
\end{split}  
\end{equation*}  
Then, through integration by parts, see $\eqref{ip}$,
\begin{equation*}\label{fin1}  
\begin{split}  
    \int_0^{\infty} \sum_{x \in V} &  v(x,t) u^{\sigma}(x,t) \phi_R^s(x,t) \;\mu(x)\;dt \\  
    &\leq - \int_0^{\infty} \sum_{x \in V}  F(u(x,t)) \Delta(\phi_R^s(x,t)) \;\mu(x)\;dt \\  
    &- \int_0^{\infty} \sum_{x \in V}  u(x,t) \partial_t(\phi^s_R(x,t)) \;\mu(x)\;dt \\  
    &= \colon J_1 + J_2.  
\end{split}  
\end{equation*}  
By the convexity of the map $p \mapsto p^s$ for $p \geq 0$, the bound $F(p) \leq C p^m$ for all $p \in [0, +\infty)$, and \eqref{est1}, we obtain

\begin{equation*}\label{fin2}  
\begin{split}  
    J_1 &\leq -s C \int_0^{\infty} \sum_{x \in V} u^m(x,t) \phi_R^{s-1}(x,t) \Delta(\phi_R(x,t)) \;\mu(x)\;dt \\  
    &\leq \frac{C}{R^{1+\alpha}} \int_0^{\infty} \sum_{x \in V}  u^m(x,t) \phi_R^{s-1}(x,t) 1_{F_R}(x,t)\;\mu(x)\;dt.  
\end{split}  
\end{equation*}  
For $J_2$, by $\eqref{est2}$,  
\begin{equation*}\label{fin3}  
\begin{split}  
    J_2 &= -s \int_0^{\infty} \sum_{x \in V}  u(x,t) \phi^{s-1}_R(x,t) \partial_t(\phi_R(x,t))\;\mu(x)\;dt \\  
    &\leq \frac{C}{R^{\frac{\theta_1}{\theta_2}}} \int_0^{\infty} \sum_{x \in V}  u(x,t) \phi^{s-1}_R(x,t) 1_{E_R}(x,t)\;\mu(x)\;dt.  
\end{split}  
\end{equation*}  
In summary,  
\begin{equation}\label{fin4}  
\begin{split}  
    \int_0^{\infty} \sum_{x \in V} &  v(x,t) u^{\sigma}(x,t) \phi_R^s(x,t) \;\mu(x)\;dt \\  
    &\leq \frac{C}{R^{1+\alpha}} \int_0^{\infty} \sum_{x \in V}  u^m(x,t) \phi_R^{s-1}(x,t) 1_{F_R}(x,t) \;\mu(x)\;dt \\  
    &\;\;\;\;+ \frac{C}{R^{\frac{\theta_1}{\theta_2}}} \int_0^{\infty} \sum_{x \in V}  u(x,t) \phi^{s-1}_R(x,t) 1_{E_R}(x,t) \;\mu(x)\;dt \\  
    &= \colon K_1 + K_2.  
\end{split}  
\end{equation}  
By Young's inequality, since $\sigma > m$,  
\begin{equation*}  
\begin{split}  
    K_1 &\leq \frac{1}{4} \int_0^{\infty} \sum_{x \in V} v(x,t) u^{\sigma}(x,t) \phi_R^s(x,t) \;\mu(x)\;dt \\  
    &+ \frac{C}{R^{(1+\alpha) \frac{\sigma}{\sigma-m}}} \int_0^{\infty} \sum_{x \in V} \phi_R^{s - \frac{\sigma}{\sigma-m}}(x,t) v^{-\frac{m}{\sigma-m}}(x,t) 1_{F_R}(x,t) \;\mu(x)\;dt 
\end{split}  
\end{equation*}  
and, since $\sigma>1$,
\begin{equation*}
\begin{split}
    K_2 \leq& \frac{1}{4} \int_0^{\infty} \sum_{x\in V}  v(x,t)u^{\sigma}(x,t)\phi_R^s(x,t) \;\mu(x)\;dt\\
    &+  \frac{C}{R^{ \frac{\theta_1\sigma}{\theta_2(\sigma-1)}}}  \int_0^{\infty} \sum_{x\in V}  \phi^{s-\frac{\sigma}{\sigma-1}}_R(x,t) v^{-\frac{1}{\sigma-1}} 1_{E_R}(x,t)\;\mu(x)\;dt.
\end{split}
\end{equation*}
Then, for every $R>0$ large enough
\begin{equation*}  
\begin{split}  
    \int_0^{\infty} \sum_{x \in V} &  v(x,t) u^{\sigma}(x,t) \phi_R^s(x,t) \;\mu(x)\;dt \\  
    &\leq \frac{C}{R^{(1+\alpha) \frac{\sigma}{\sigma-m}}} \int_0^{\infty} \sum_{x \in V} v^{-\frac{m}{\sigma-m}}(x,t) 1_{F_R}(x,t) \;\mu(x)\;dt \\  
    & \quad + \frac{C}{R^{\frac{\theta_1 \sigma}{\theta_2 (\sigma-1)}}} \int_0^{\infty} \sum_{x \in V}  v^{-\frac{1}{\sigma-1}}(x,t) 1_{E_R}(x,t) \;\mu(x)\;dt \\   
\end{split}  
\end{equation*}  
Now let $D_R(x_0) \coloneqq \{ (x,t) \in V \times [0, +\infty) : d(x_0, x)^{\theta_1} + t^{\theta_2} \leq R^{\theta_1} \}$ and note that there exists an $l \in \mathbb{N}$, $l \geq 3\theta_1 - 1$, such that for every $R > 0$,
\begin{equation}\label{inclusion}
    F_R \subset \bigcup_{k=0}^l E_{2^{\frac{k}{\theta_1} - 1}R}.  
\end{equation}  
Then, by $\eqref{hp1}$ and $\eqref{hp2}$,
\begin{equation*}  
\begin{split}  
 \int_0^{\infty} \sum_{x \in V} & v(x,t) u^{\sigma}(x,t) 1_{D_R(x_0)}(x,t) \;\mu(x)\;dt \\
& \leq \int_0^{\infty} \smash{\sum_{x \in V}} v(x,t)u^{\sigma}(x,t)\phi_R^s(x,t)\;\mu(x)\;dt \\
& \leq \frac{C}{R^{(1+\alpha) \frac{\sigma}{\sigma-m}}} \sum_{k=0}^l \int_0^{\infty} \sum_{x \in V} v^{-\frac{m}{\sigma-m}}(x,t) 1_{E_{2^{\frac{k}{\theta_1} - 1}R}}(x,t)\;\mu(x)\;dt \\  
& \quad + \frac{C}{R^{\frac{\theta_1 \sigma}{\theta_2 (\sigma-1)}}} \int_0^{\infty} \sum_{x \in V} v^{-\frac{1}{\sigma-1}}(x,t) 1_{E_R}(x,t) \;\mu(x)\;dt \\   
& \leq \frac{C}{R^{(1+\alpha) \frac{\sigma}{\sigma-m}}} \sum_{k=0}^l \left( 2^{\frac{k}{\theta_1}-1} R \right)^{(1+\alpha) \frac{\sigma}{\sigma-m}} + C \\ 
& \leq C.
\end{split}
\end{equation*}
Passing to the limit as $R\rightarrow+\infty$, we have 
\begin{equation}\label{l1} 
    \int_0^{\infty} \sum_{x \in V} v(x,t) u^{\sigma}(x,t) \;\mu(x)\;dt \leq C.  
\end{equation}  
It remains to show that $u \equiv 0$ on $V \times [0, +\infty)$. For this, return to \eqref{fin4}. By Hölder's Inequality and applying $\eqref{inclusion}$ again, we obtain from $\eqref{hp1}$ and $\eqref{hp2}$ that
\begin{equation*}  
\begin{split}  
   \int_0^{\infty} \sum_{x \in V} & v(x,t) u^{\sigma}(x,t) 1_{D_R(x_0)}(x,t)\;\mu(x)\;dt \\  
   &\leq \int_0^{\infty} \sum_{x \in V} v(x,t) u^{\sigma}(x,t) \phi_R^s(x,t) \;\mu(x)\;dt \\  
   &\leq \frac{C}{R^{1+\alpha}} \left( \int_0^{\infty} \sum_{x \in V} \phi_R^{s - \frac{\sigma}{\sigma - m}}(x,t) v^{-\frac{m}{\sigma-m}}(x,t) 1_{F_R}(x,t)\;\mu(x)\;dt \right)^{\frac{\sigma-m}{\sigma}} \\  
   &\quad \times \left( \int_0^{\infty} \sum_{x \in V} v(x,t) u^{\sigma}(x,t) \phi_R^s(x,t) 1_{F_R}(x,t) \;\mu(x)\;dt \right)^{\frac{m}{\sigma}} \\  
   &\quad + \frac{C}{R^{\frac{\theta_1}{\theta_2}}} \left( \int_0^{\infty} \sum_{x \in V}  \phi_R^{s - \frac{\sigma}{\sigma - 1}}(x,t) v^{-\frac{1}{\sigma-1}}(x,t) 1_{E_R}(x,t) \;\mu(x)\;dt \right)^{\frac{\sigma-1}{\sigma}} \\  
   &\quad \times \left( \int_0^{\infty} \sum_{x \in V} v(x,t) u^{\sigma}(x,t) \phi_R^s(x,t) 1_{E_R}(x,t) \;\mu(x)\;dt \right)^{\frac{1}{\sigma}} \\  
   &\leq C \left( \int_0^{\infty} \sum_{x \in V}  v(x,t) u^{\sigma}(x,t) 1_{F_R}(x,t) \;\mu(x)\;dt \right)^{\frac{m}{\sigma}} \\  
   &\quad + C \left( \int_0^{\infty} \sum_{x \in V}  v(x,t) u^{\sigma}(x,t) 1_{E_R}(x,t) \;\mu(x)\;dt \right)^{\frac{1}{\sigma}}.
\end{split}  
\end{equation*}  
Then, letting $R \rightarrow \infty$, by \eqref{l1}, we have  
\begin{equation*}  
     \int_0^{\infty} \sum_{x \in V} v(x,t) u^{\sigma}(x,t) \;\mu(x)\;dt \leq 0.  
\end{equation*}  
Since $\mu$ and $v$ are positive, we conclude that $u \equiv 0$ on $V \times [0, +\infty)$. This completes the proof.  
\end{proof}
In what follows, we will prove the corollaries of our main result.
\begin{proof}[Proof of Corollary~\ref{cor1}]
First, observe that $E_R$ defined in $\eqref{er}$ satisfies  
\begin{equation*}  
    E_R \subset B_{2^{\frac{1}{\theta_1}} R}(x_0) \times \left[ 0, 2^{\frac{1}{\theta_2}} R^{\frac{\theta_1}{\theta_2}} \right].  
\end{equation*}  
For $R$ large enough, by \eqref{cor1a1}, we have 
\begin{equation*}  
\begin{split}  
    \int_0^{\infty} \sum_{x \in V} & v^{-\frac{1}{\sigma-1}}(x,t) 1_{E_R}(x,t) \;\mu(x)\;dt \\  
    &\leq \int_0^{\infty} \sum_{x \in V} f^{-\frac{1}{\sigma-1}}(t) g^{-\frac{1}{\sigma-1}}(x) 1_{E_R}(x,t) \;\mu(x)\;dt \\  
    &\leq \left( \int_0^{2^{\frac{1}{\theta_2}} R^{\frac{\theta_1}{\theta_2}}} f^{-\frac{1}{\sigma-1}}(t) \;dt  \right) \left( \sum_{x \in V} g^{-\frac{1}{\sigma-1}}(x) 1_{B_{2^{\frac{1}{\theta_1}} R}(x_0)} \;\mu(x) \right) \\  
    &\leq C R^{ \frac{\theta_1}{\theta_2} \delta_1 + \delta_2} \leq C R^{\frac{\theta_1 \sigma}{\theta_2 (\sigma-1)}} 
\end{split}  
\end{equation*} 
provided
\begin{equation}\label{cor1pf1}
    \frac{\theta_1}{\theta_2}\delta_1 + \delta_2 \leq \frac{\theta_1}{\theta_2}\frac{\sigma}{\sigma-1}.
\end{equation}
Similarly, applying \eqref{cor1a2},
\begin{equation*}
\begin{split}
      \int_0^{\infty} \sum_{x \in V} & v^{-\frac{m}{\sigma-m}}(x,t) 1_{E_R}(x,t) \;\mu(x)\;dt \leq C R^{(1+\alpha)\frac{\sigma}{\sigma-m}}
\end{split}
\end{equation*}
provided 
\begin{equation}\label{cor1pf2}  
\frac{\theta_1}{\theta_2} \delta_3 + \delta_4 \leq (1+\alpha) \frac{\sigma}{\sigma-m}.
\end{equation}  
By the assumptions on $\delta_1, \delta_2, \delta_3, \delta_4$, we can choose $\theta_1 \geq 2$, $\theta_2 \geq 1$ such that \eqref{cor1pf1} and \eqref{cor1pf2} hold. The result follows from Theorem~\ref{mainthm}. 
\end{proof}  

\begin{proof}[Proof of Corollary~\ref{cor2}]
This is an immediate consequence of Corollary~\ref{cor1}, obtained by choosing $f \equiv 1$ and $\delta_1=\delta_3=1$.
\end{proof}   

\begin{proof}[Proof of Corollary~\ref{cor3}] 
This is an immediate consequence of Corollary~\ref{cor2}, obtained by choosing $g \equiv C'$ and $\delta=\delta'=1$.  
\end{proof}  
 \section{Examples}
In the following example, we consider the particular case of the graph $V=\mathbb{Z}^N$ with $N\geq1$ and $v\equiv1$ as in \cite{dariopara}.
\begin{exm}\label{exm1}
The \textit{$N$-dimensional integer lattice graph} consists of the set of vertices 
\begin{equation*}
\mathbb{Z}^N = \{x = (x_1, \dots, x_N) : x_k \in \mathbb{Z} \text{ for } k = 1, \dots, N\},
\end{equation*}
equipped with the edge weight function 
\begin{equation*}
\omega\colon \mathbb{Z}^N \times \mathbb{Z}^N \to [0, +\infty), \quad \omega_{xy} = 
\begin{cases} 
1, & \text{if } \sum_{k=1}^N |y_k - x_k| = 1, \\
0, & \text{otherwise},
\end{cases}
\end{equation*}
and the node measure 
\begin{equation*}
\mu(x) \coloneqq \sum_{y \sim x} \omega_{xy} = 2N \quad \text{for all } x \in \mathbb{Z}^N.
\end{equation*}
The set of edges is
\begin{equation*}
E = \left\{ (x, y) : x, y \in \mathbb{Z}^N, \, \sum_{i=1}^N |x_i - y_i| = 1 \right\}.
\end{equation*}
The Laplacian of a function $u\colon \mathbb{Z}^N \to \mathbb{R}$ is given by
\begin{equation*}
\Delta u(x) = \frac{1}{2N} \sum_{y \sim x} \left( u(y) - u(x) \right) \quad \text{for all } x \in \mathbb{Z}^N.
\end{equation*}
We endow the graph $(\mathbb{Z}^N, \omega, \mu)$ with the Euclidean distance 
\begin{equation*}
|x - y| \coloneqq d(x, y) = \left( \sum_{i=1}^N (x_i - y_i)^2 \right)^{1/2}.
\end{equation*}
It is straightforward to verify that for $R > 1$, 
\begin{equation*}
\sum_{x \in B_R(0)} \mu(x) = 2N \sum_{x \in B_R(0)} 1 \approx C R^N.
\end{equation*}

Assumption \textbf{(A)} is satisfied with $\alpha=1$. Indeed, by Remark $2.5$ in \cite{dariopara} and Example $5.1$ in \cite{wu2018nonexistence}, for every $x\in \mathbb{Z}^N$ if $d(x,0)\geq1$, we have
\begin{equation*}
    \Delta d(x,0)\leq \frac{1}{2 d(x,0)}.
\end{equation*}
By Corollary~\ref{cor3}, we conclude that the problem 
\begin{equation*}
    u_t \geq \Delta(F(u)) + u^\sigma \quad \text{in } \mathbb{Z}^N \times [0, +\infty),
\end{equation*}
with $m \in (0, +\infty)$ fixed and $F\colon[0,+\infty)\to[0,+\infty)$ such that $0\leq F(p) \leq C p^m$ for all $p\geq 0$, does not admit a nontrivial, nonnegative solution if 
\begin{equation*}
    \sigma > \max(1, m) \quad \text{and} \quad N \leq \frac{2}{\sigma - m}, \quad \text{i.e., } \max(1, m) < \sigma \leq  m + \frac{2}{N}.
\end{equation*}
In particular, $m>1-\frac{2}{N}$ in the nonexistence regime.

By the global existence result in \cite{lin2017existence}, we know that 
\begin{equation*}
    u_t \geq \Delta u + u^\sigma \quad \text{in } \mathbb{Z}^N \times (0, +\infty)
\end{equation*}
admits a global-in-time nonnegative solution if the initial condition $u(x, 0) = u_0(x)$ is sufficiently small and $\sigma > 1 + \frac{2}{N}$. Therefore, in the case $F(u) = u$, the result in Theorem~\ref{mainthm} is optimal, as also observed in \cite{dariopara}.

Though it is not known whether our result is optimal in the context of nonlinear reaction-diffusion equations on graphs, we comment that in $\mathbb{R}^N$ the equation
\begin{equation*}
    u_t = \Delta u^m + u^\sigma,
\end{equation*}
with $m > 1$ and $\sigma \in \big(1, m + \frac{2}{N}\big)$, does not admit nonnegative global solutions for any initial datum, see \cite[Theorem~2, p.\,217]{SGKM}, whereas for $\sigma > m + \frac{2}{N}$ small positive data give rise to global solutions. Though we shall not discuss this issue here, we expect the result to be sharp at least on $\mathbb{Z}^N$ in the Porous Medium case.
\end{exm}
As we will see below, one has the same range of nonexistence when pairing the integer lattice with a finite group.
\begin{exm}
In \cite{dariopara}, it was shown that if $(V_1, \omega_1, \mu_1)$ is an infinite graph satisfying Assumption \textbf{(A)} for some $\alpha \in [0, 1]$ and $(V_2, \omega_2, \mu_2)$ is a finite graph, then the product graph $(V, \omega, \mu)$, defined by $V = V_1 \times V_2$, also satisfies Assumption \textbf{(A)} for the same $\alpha$. The edge weights $\omega$ and measure $\mu$ are defined as follows: for $(x_1, x_2), (y_1, y_2) \in V_1 \times V_2$,
\begin{equation*}
\omega((x_1, x_2), (y_1, y_2)) := 
\begin{cases}
\omega_1(x_1, y_1) & \text{if } x_2 = y_2, \\
\omega_2(x_2, y_2) & \text{if } x_1 = y_1, \\
0 & \text{otherwise},
\end{cases}
\end{equation*}
and $\mu$ is a positive measure on $V_1 \times V_2$ satisfying
\begin{equation*}
\mu(x_1, x_2) \geq C \max\{\mu_1(x_1), \mu_2(x_2)\}
\end{equation*}
for $(x_1, x_2) \in V_1 \times V_2$. For $\alpha$ as above, a pseudometric $d$ on $V$ is introduced by
\begin{equation*}
d((x_1, x_2), (y_1, y_2)) := \left(d_1^{1+\alpha}(x_1, y_1) + d_2^{1+\alpha}(x_2, y_2)\right)^{1/(1+\alpha)},
\end{equation*}
where $d_1$ and $d_2$ are pseudometrics on $V_1$ and $V_2$, respectively, with finite jump sizes $j_1$ and $j_2$. The resulting pseudometric $d$ has a finite jump size $j \leq \max\{j_1, j_2\}$. Additionally, if both $V_1$ and $V_2$ are connected, locally finite and undirected, then so is $V$.

A particular example is given by taking $(V_1, \omega_1, \mu_1)$ as in Example~\ref{exm1} and $(V_2, \omega_2, \mu_2)$ as follows: let $V_2$ be a finite group $(G,\cdot)$ generated by $h \in G$, i.e. $G$ is cyclic. Define the edge weights $\omega_2$ by
\begin{equation*}
\omega_2(g, g') := 
\begin{cases}
1 & \text{if } g' = g \cdot h \text{ or } g' = g \cdot h^{-1}, \\
0 & \text{otherwise},
\end{cases}
\end{equation*}
for $g, g' \in G$, and the measure $\mu_2$ by
\begin{equation*}
\mu_2(g) := \sum_{g' \in G} \omega_2(g, g').
\end{equation*}
Equip $V_2$ with the natural distance as defined in \eqref{natdist}.

For $V = \mathbb{Z}^N \times G^K$ with $K \geq 1$, it is easy to verify that for $R > 0$ sufficiently large, the volume growth satisfies
\begin{equation*}
\text{Vol}\left(B_R((0,h ))\right) \leq C R^N.
\end{equation*}
By Corollary~\ref{cor3}, the problem
\begin{equation*}
u_t \geq \Delta(F(u)) + u^\sigma \quad \text{in } \mathbb{Z}^N \times G^K \times [0, +\infty),
\end{equation*}
with $m \in (0, +\infty)$ fixed and $F: [0, +\infty) \to [0, +\infty)$ such that $0 \leq F(p) \leq C p^m$ for all $p \geq 0$, does not admit a nontrivial, nonnegative solution if
\begin{equation*}
\sigma > \max(1, m) \quad \text{and} \quad N \leq \frac{2}{\sigma - m}, \quad \text{i.e., } \max(1, m) < \sigma \leq  m + \frac{2}{N}.
\end{equation*}
This coincides with the bounds in Example~\ref{exm1}. Hence, it is independent of the specific group $G$.
\end{exm}
We now present an example involving a graph with exponential volume growth.
\begin{exm}
A weighted graph $(T, \omega, \mu)$ is a \textit{tree} if, for each $x,y\in T$, there exists a unique minimal path connecting $x$ to $y$. Here, the set of edges $E$ is determined by declaring which pairs $x, y \in T$ satisfy $y \sim x$, rather than explicitly specifying a weight function $\omega$. We assume that a suitable weight function $\omega$ exists, inducing the set of edges $E$.

Equipping $T$ with the natural distance $d_*$ defined in \eqref{natdist}, we define, for $k \geq 0$ and a fixed node $x_0\in T$,
\begin{equation}\label{dk}
D_k \coloneqq \{ x \in T : d_*(x_0, x) = k \}.
\end{equation}
If $x \in D_k$ for $k \geq 1$, there exists a unique $y \in D_{k-1}$ such that $y \sim x$.

A tree is \textit{homogeneous of degree $N > 1$} if:
\begin{itemize}
    \item Every node $x \in D_k$ for $k \geq 1$ has exactly one neighbour $y \in D_{k-1}$ and exactly $N$ neighbours $y_1, \dots, y_N \in D_{k+1}$.
    \item The node $x_0$ has exactly $N+1$ neighbours $y_1, \dots, y_{N+1} \in D_1$. We call $x_0$ the root of $T$.
\end{itemize}
Now consider the problem in \eqref{maineq} with $m\in(0,+\infty)$ fixed and $\sigma>\max(1,m)$. Suppose there exists a constant $C>0$ such that $0\leq F(p)\leq Cp^m$ for all $p\geq0$. We equip $T$ with the following edge weights:
\begin{equation*}
\omega_{xy} = 
\begin{cases}
\frac{1}{N+1} & \text{if } x \sim y, \\
0 & \text{otherwise},
\end{cases}
\end{equation*}
and node measure $\mu(x) \coloneqq \text{degree}(x) = N+1$. Under this construction, $T$ is a locally finite, connected, undirected infinite graph. In this setting, we equip $T$ with the natural distance $d_*$, and by Remark~\ref{natdistrmk}, Assumption \textbf{(A)} holds with $\alpha = 0$.

Let $v(x,t) = g(x)$ be a time-independent potential, where
\begin{equation*}
g(x) \geq Cd_*(x,x_0)^{\lambda} N^{\lambda d_*(x,x_0)}\quad\text{for } \lambda=\max\left( \sigma-1, \frac{\sigma-m}{m} \right),
\end{equation*}
i.e., $g(x)$ grows exponentially with distance from $x_0$. For any $R > 1$, since the cardinality of the sets $D_k$ defined in \eqref{dk} is $|D_k| = N^k + N^{k-1}$,
\begin{equation*}
\begin{split}
\sum_{x \in B_R} g(x)^{-\frac{1}{\sigma-1}} \;\mu(x) &= \sum_{k=1}^{\lfloor R \rfloor} \sum_{x \in D_k} g(x)^{-\frac{1}{\sigma-1}} \;\mu(x) + g(x_0)^{-\frac{1}{\sigma-1}} \mu(x_0) \\
&\leq C(N+1) \sum_{k=1}^{\lfloor R \rfloor} |D_k| \frac{1}{kN^k} + C \\
&\leq C \sum_{k=1}^{\lfloor R \rfloor} \frac{1}{k}+C\\
&\leq C\log R.
\end{split}
\end{equation*}
Similarly,
\begin{equation*}
\sum_{x \in B_R} g(x)^{-\frac{m}{\sigma-m}} \;\mu(x) \leq C\log R.
\end{equation*}

By Corollary~\ref{cor2}, there are no nontrivial, nonnegative solutions to \eqref{maineq} for all $\sigma>\max(1,m)$.
\end{exm}

\section{Finite Graphs}
On finite graphs, Assumption \textbf{(A)} is inherently satisfied by $d_*$ (see $\eqref{natdist}$ for a definition). In the following theorem, we explore the nonexistence of solutions to the problem in \eqref{maineq} in this context, with a generic $F \colon [0,+\infty)\to \mathbb{R}$.

\begin{rmk}
Graphs do not admit a topological definition of a boundary. However, for a graph $V$ and a subgraph $G \subset V$, the boundary of $G$ relative to $V$ can be defined as  
\begin{equation*}
\partial G = \{ x \in G : \exists \, y \in V \setminus G, \, y \sim x \}.  
\end{equation*}
In Theorem~\ref{finigraphthm}, we consider finite graphs, which have no boundary, analogous to compact Riemannian manifolds without boundary. For $\eqref{maineq}$ on a finite subgraph $G \subset V$ with appropriate boundary conditions, the results may differ. This is the analogue of the case of compact Riemannian manifolds with boundary.
\end{rmk}

\begin{thm}\label{finigraphthm} 
Let $(V, \omega, \mu)$ be a weighted finite graph. Let $\sigma > 1$, $v\colon V \times [0, +\infty) \to \mathbb{R}$ be a positive function, and $F\colon\mathbb{R}\to\mathbb{R}$. Suppose that for every $T \geq T_0 > 0$,  
\begin{equation}\label{finitegrhp}
    \int_T^{2T} \sum_{x \in V} v^{-\frac{1}{\sigma-1}}(x,t) \;\mu(x)\;dt \leq C T^{\frac{\sigma}{\sigma-1}}.  
\end{equation}  
Let $u\colon V \times [0, +\infty) \to \mathbb{R}$ be a nonnegative global very weak solution of \eqref{maineq}. Then, $u \equiv 0$.  
\end{thm}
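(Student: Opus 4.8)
The plan is to adapt the test-function argument used in the proof of Theorem~\ref{mainthm} to the finite-graph setting, where the spatial part of the problem becomes trivial and only the time direction carries information. Since the graph $V$ is finite, every spatial sum is automatically finite, and Assumption \textbf{(A)} holds with the natural distance $d_*$; moreover, the diameter of $V$ is bounded, so the sets $E_R$ and $D_R$ in the earlier proof collapse essentially to time intervals. Accordingly, I would choose a purely time-dependent cutoff: fix $\phi \in \mathcal{C}^2([0,+\infty))$ with $\phi \equiv 1$ on $[0,1]$ and $\phi \equiv 0$ on $[2,+\infty)$, and set $\phi_T(t) \coloneqq \phi(t/T)$ for $T \geq T_0$. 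This is an admissible test function in the sense of \eqref{defneq}, being nonnegative, compactly supported in $[0,2T]$ in time (and trivially over the finite set $V$), and smooth in $t$.

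\textbf{The key steps, in order.} First I would test \eqref{maineq} against $\phi_T^s(t)$ for a suitable large exponent $s > \frac{\sigma}{\sigma-1}$, obtaining from \eqref{defneq} the inequality
\begin{equation*}
  \int_0^{\infty} \sum_{x \in V} v(x,t) u^{\sigma}(x,t) \phi_T^s(t) \,\mu(x)\,dt
  \leq - \int_0^{\infty} \sum_{x \in V} u(x,t)\, \partial_t(\phi_T^s(t)) \,\mu(x)\,dt,
\end{equation*}
where the crucial observation is that the Laplacian term \emph{drops out entirely}: since $\phi_T$ does not depend on $x$, integration by parts \eqref{ip} gives $\sum_{x\in V} F(u(x,t))\,\Delta(\phi_T^s(t))\,\mu(x) = \sum_{x\in V} F(u(x,t))\cdot 0 \cdot \mu(x) = 0$, because $\Delta$ annihilates spatially constant functions. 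This is precisely why no hypothesis on $F$ beyond measurability is needed, and why $F$ may be an arbitrary map $\mathbb{R}\to\mathbb{R}$. Second, using $|\partial_t(\phi_T^s)| \leq s\,\phi_T^{s-1}\,|\partial_t \phi_T| \leq \frac{C}{T}\,\phi_T^{s-1}\,1_{\{T \leq t \leq 2T\}}$ and then Young's inequality with exponents $\sigma$ and $\frac{\sigma}{\sigma-1}$, I would absorb the factor $u\,\phi_T^{s-1}$ against the good term $v\,u^{\sigma}\,\phi_T^{s}$: writing $u\,\phi_T^{s-1} = (v^{1/\sigma} u\, \phi_T^{s/\sigma})\cdot(v^{-1/\sigma}\phi_T^{s-1-s/\sigma})$, Young's inequality yields
\begin{equation*}
  \frac{C}{T}\, u\, \phi_T^{s-1} 1_{\{T\le t\le 2T\}}
  \leq \tfrac{1}{2}\, v\, u^{\sigma}\, \phi_T^{s}
  + \frac{C}{T^{\frac{\sigma}{\sigma-1}}}\, v^{-\frac{1}{\sigma-1}}\, \phi_T^{s-\frac{\sigma}{\sigma-1}} 1_{\{T\le t\le 2T\}},
\end{equation*}
the choice $s > \frac{\sigma}{\sigma-1}$ guaranteeing a nonnegative power of $\phi_T$ on the right. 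Third, after absorbing the $\tfrac12$-term into the left-hand side, I would invoke hypothesis \eqref{finitegrhp} to bound the remaining term by $\frac{C}{T^{\sigma/(\sigma-1)}} \cdot C T^{\sigma/(\sigma-1)} = C$, obtaining a uniform-in-$T$ bound
\begin{equation*}
  \int_0^{\infty} \sum_{x \in V} v(x,t) u^{\sigma}(x,t) \,\mu(x)\,dt \leq C
\end{equation*}
after letting $T \to \infty$ (monotone convergence, since $\phi_T^s \uparrow 1$).

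\textbf{The final step} mirrors the end of the proof of Theorem~\ref{mainthm}: returning to the tested inequality and applying Hölder's inequality instead of Young's to the time-derivative term, I would bound the left-hand side by $C\big(\int_T^{2T}\sum_{x} v\,u^{\sigma}\,\mu\,dt\big)^{1/\sigma}$, and the tail of the convergent integral \eqref{l1} forces this to vanish as $T\to\infty$, giving $\int_0^\infty\sum_x v\,u^\sigma\,\mu\,dt \leq 0$. Since $v$ and $\mu$ are strictly positive and $u\geq 0$, this yields $u\equiv 0$. \textbf{The main obstacle} I anticipate is largely bookkeeping rather than conceptual: one must verify carefully that the integral $\int_0^\infty f^{-1/(\sigma-1)}\ldots$ splits dyadically so that hypothesis \eqref{finitegrhp}, stated only on intervals $[T,2T]$, can be chained to control the full support $[0,2T]$ of $\phi_T$—but since the cutoff support is a single dyadic-scale interval this chaining is trivial here, which is exactly the simplification that the finite-graph geometry affords over the infinite case.
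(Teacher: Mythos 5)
Your proposal is correct and follows essentially the same route as the paper's proof of Theorem~\ref{finigraphthm}: a purely time-dependent cutoff $\phi_T^s$ with $s > \frac{\sigma}{\sigma-1}$, the observation via \eqref{ip} that the diffusion term vanishes because $\Delta$ annihilates spatially constant test functions (which is why no hypothesis on $F$ is needed), Young's inequality combined with \eqref{finitegrhp} to obtain the uniform a priori bound, and finally H\"older's inequality with the vanishing tail to conclude $u \equiv 0$. The only cosmetic slip is your citation of \eqref{l1} (from Theorem~\ref{mainthm}) where the finite-graph analogue \eqref{fingrfinest}, which you had already derived, is the bound actually being used.
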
  

\begin{proof}  
The proof follows the same approach as Theorem 5.2 in \cite{dariopara}. For completeness, we provide the details: Let $\varphi \in C^2([0,+\infty))$ satisfy $\varphi \equiv 1$ on $[0, 1]$, $\varphi \equiv 0$ on $[2,+\infty)$, and $\varphi' \leq 0$. Define $\phi_T(t) \coloneqq \varphi\left(\frac{t}{T}\right)$ for $T > 0$. Since $V$ is finite, $\phi_T$ is an admissible test function. Hence, we can test $u$ against $\phi_T^s$ with $s > \frac{\sigma}{\sigma-1}$. Observe that, by $\eqref{ip}$,
\begin{equation*}  
    \sum_{x \in V}  \Delta \left(F(u(x,t))\right) \phi_T^s(t) \;\mu(x) = \sum_{x \in V} F(u(x,t)) \Delta \phi_T^s(t) \;\mu(x) = 0.  
\end{equation*}  
From $\eqref{defneq}$, it follows that
\begin{equation}\label{fingrhölder}
\begin{split}
\int_0^{\infty} \sum_{x \in V} v(x, t) u^\sigma(x, t) \phi_T^s(t) \;\mu(x)\; dt &\leq -s \int_0^{\infty} \sum_{x \in V} u(x, t) \phi_T^{s-1}(t) \frac{\partial \phi_T}{\partial t}(t)\;\mu(x)\; dt\\
&\leq \frac{C}{T} \int_T^{2T} \sum_{x \in V} u(x, t) \phi_T^{s-1}(t) \;\mu(x)\;dt.
\end{split}
\end{equation}
By Young’s Inequality and $\eqref{finitegrhp}$, we have
\begin{equation*}
\int_0^{\infty} \sum_{x \in V} v(x, t) u^\sigma(x, t) \phi_T^s(t) \;\mu(x)\; dt \leq \frac{C}{T^{\frac{\sigma}{\sigma-1}}} \int_T
^{2T} \sum_{x \in V} v(x, t)^{-\frac{1}{\sigma-1}} \;\mu(x)\; dt \leq C.
\end{equation*}
Thus,
\begin{equation*}
   \int_T^{2T} \sum_{x \in V} v(x, t) u^\sigma(x, t) \;\mu(x)\; dt  \leq \int_0^{\infty} \sum_{x \in V} v(x, t) u^\sigma(x, t) \phi_T^s(t) \;\mu(x)\; dt \leq C
\end{equation*}
and we conclude 
\begin{equation}\label{fingrfinest}
    \int_0^{+\infty} \sum_{x \in V} v(x, t) u^\sigma(x, t) \;\mu(x)\; dt \leq C. 
\end{equation}
It remains to show that $u \equiv 0$ on $V \times [0,+\infty)$. For this, return to $\eqref{fingrhölder}$. Applying Hölder’s inequality and $\eqref{finitegrhp}$, we obtain
\begin{equation*}
\begin{split}
\int_0^T \sum_{x \in V} v(x, t) u^\sigma(x, t) \;\mu(x)\; dt &\leq \int_0^{\infty} \sum_{x \in V} v(x, t) u^\sigma(x, t) \phi_T^s(t) \;\mu(x)\; dt\\
&\leq C \left( \int_T^{2T} \sum_{x \in V} v(x, t) u^\sigma(x, t) \;\mu(x)\; dt \right)^{\frac{1}{\sigma}}.
\end{split}
\end{equation*}
Letting $T \to \infty$ and using $\eqref{fingrfinest}$, we conclude
\begin{equation*}
\int_0^{+\infty} \sum_{x \in V} v(x, t) u^\sigma(x, t) \;\mu(x)\; dt = 0.
\end{equation*}
Hence, $u \equiv 0$ on $V \times [0,+\infty)$.
\end{proof}  

An immediate consequence is the following result for a time-independent potential $v$; see also \cite{wu2018nonexistence} for the case $v\equiv1$:  
\begin{cor}  
Let $(V, \omega, \mu)$ be a weighted finite graph. Let $\sigma > 1$, $v\colon V \to \mathbb{R}$ be a positive function, and $F\colon\mathbb{R}\to\mathbb{R}$. Let $u\colon V \times [0, +\infty) \to \mathbb{R}$ be a nonnegative global very weak solution of \eqref{maineq}. Then, $u \equiv 0$.  
\end{cor}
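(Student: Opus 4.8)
The plan is to deduce this corollary directly from Theorem~\ref{finigraphthm}, the only difference being that the potential $v$ no longer depends on the time variable. Accordingly, the whole task reduces to checking that a time-independent positive potential on a finite graph automatically satisfies the growth hypothesis \eqref{finitegrhp}; once this is verified, Theorem~\ref{finigraphthm} yields $u \equiv 0$ with no additional work. Note that the arbitrariness of $F\colon\mathbb{R}\to\mathbb{R}$ is already absorbed by Theorem~\ref{finigraphthm}, whose proof exploits that the Laplacian term vanishes after integration by parts against a purely time-dependent test function.

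First I would use the finiteness of $V$ together with the positivity of $v$ to observe that
\[
    M \coloneqq \sum_{x \in V} v^{-\frac{1}{\sigma-1}}(x)\,\mu(x)
\]
is a finite, strictly positive constant, being a finite sum of positive terms ($\mu(x)\in(0,+\infty)$ and $v(x)>0$ for each $x$). Since $v$ carries no dependence on $t$, the left-hand side of \eqref{finitegrhp} collapses to
\[
    \int_T^{2T} \sum_{x \in V} v^{-\frac{1}{\sigma-1}}(x)\,\mu(x)\;dt = \int_T^{2T} M \, dt = M\,T.
\]

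It then remains to compare $M\,T$ with $C\,T^{\frac{\sigma}{\sigma-1}}$. Here I would invoke that $\sigma>1$ forces the exponent $\frac{\sigma}{\sigma-1}=1+\frac{1}{\sigma-1}$ to exceed $1$, so that $T^{\frac{\sigma}{\sigma-1}} = T\cdot T^{\frac{1}{\sigma-1}} \geq T\,T_0^{\frac{1}{\sigma-1}}$ for every $T \geq T_0 > 0$. Choosing $C \geq M\,T_0^{-\frac{1}{\sigma-1}}$ then gives $M\,T \leq C\,T^{\frac{\sigma}{\sigma-1}}$ for all $T \geq T_0$, which is exactly \eqref{finitegrhp}. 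Applying Theorem~\ref{finigraphthm} concludes that $u \equiv 0$.

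There is essentially no analytic obstacle: the test-function mechanism of Theorem~\ref{finigraphthm} already performs all the substantive work, and only the elementary growth comparison above must be confirmed. The single point deserving care is why $M$ is genuinely finite, and this is precisely where finiteness of the graph enters --- on an infinite graph the sum $\sum_{x\in V} v^{-1/(\sigma-1)}(x)\,\mu(x)$ may diverge and the reduction would break down, whereas on a finite graph it holds automatically.
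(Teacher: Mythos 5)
Your proposal is correct and matches the paper's intended argument: the paper states this corollary as an immediate consequence of Theorem~\ref{finigraphthm}, and your verification that a time-independent positive potential on a finite graph satisfies \eqref{finitegrhp} (via the finite positive constant $M$ and the elementary bound $MT \leq C T^{\frac{\sigma}{\sigma-1}}$ for $T \geq T_0$, since $\frac{\sigma}{\sigma-1}>1$) is exactly the routine check the paper leaves implicit.
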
  



\begin{thebibliography}{1000}

\bibitem{bandle1998blowup}
C. Bandle, H. Brunner. \textit{Blowup in diffusion equations: A survey}. Journal of Computational and Applied Mathematics, \textbf{97}(1-2), 3--22. 1998.

\bibitem{bandle2011fujita}
C. Bandle, M. A. Pozio, A. Tesei. \textit{The Fujita exponent for the Cauchy problem in the hyperbolic space}. Journal of Differential Equations, \textbf{251}(8), 2143--2163. 2011.

\bibitem{fde1}
E. Berchio, M. Bonforte, G. Grillo. \textit{Smoothing effects and extinction in finite time for fractional fast diffusions on Riemannian manifolds}. arXiv preprint, 2024, \url{https://arxiv.org/abs/2405.17126}.

\bibitem{biagi2024liouville}
S. Biagi, G. Meglioli, F. Punzo. \textit{A Liouville theorem for elliptic equations with a potential on infinite graphs}. Calculus of Variations and Partial Differential Equations, \textbf{63}(7), 165. 2024.

\bibitem{deng2000role}
K. Deng, H. A. Levine. \textit{The Role of Critical Exponents in Blow-Up Theorems: The Sequel}. Journal of Mathematical Analysis and Applications, \textbf{243}(1), 85--126. 2000.

\bibitem{fujita1966blowing}
H. Fujita. \textit{On the blowing up of solutions of the Cauchy problem for $u_t = \Delta u + u^{\sigma}$}. Journal of the Faculty of Science, University of Tokyo, \textbf{13}, 109--124. 1966.

\bibitem{grigrm1}
A. Grigorian, Y. Sun. \textit{On Nonnegative Solutions of the Inequality $\Delta u + u^\sigma \leq 0$ on Riemannian Manifolds}. Communications on Pure and Applied Mathematics, \textbf{67}(8), 1336--1352. 2014.

\bibitem{fde2}
G. Grillo, M. Muratori, F. Punzo. \textit{Fast diffusion on noncompact manifolds: Well-posedness theory and connections with semilinear elliptic equations}. Transactions of the American Mathematical Society, \textbf{374}(9), 6367--6396. 2021.

\bibitem{grillo2024blow}
G. Grillo, G. Meglioli, F. Punzo. \textit{Blow-up and global existence for semilinear parabolic equations on infinite graphs}. arXiv preprint, 2024. \url{https://arxiv.org/abs/2406.15069}

\bibitem{grillo2025porous}
G. Grillo, D. D. Monticelli, F. Punzo. \textit{The porous medium equation on noncompact manifolds with nonnegative Ricci curvature: A Green function approach}. Journal of Differential Equations, 2025. \url{https://doi.org/10.1016/j.jde.2025.02.062}

\bibitem{gu2020global}
Q. Gu, Y. Sun, J. Xiao, F. Xu. \textit{Global positive solution to a semi-linear parabolic equation with potential on Riemannian manifold}. Calculus of Variations and Partial Differential Equations, \textbf{59}(5), 170. 2020.

\bibitem{gu2023semi}
Q. Gu, X. Huang, Y. Sun. \textit{Semi-linear elliptic inequalities on weighted graphs}. Calculus of Variations and Partial Differential Equations, \textbf{62}(2), 42. 2023.

\bibitem{hao2022sharpliouvilletyperesults}
L. Hao, Y. Sun. \textit{Sharp Liouville type results for semilinear elliptic inequalities involving gradient terms on weighted graphs}. Discrete \& Continuous Dynamical Systems-Series S, \textbf{16}(6), 1484--1516. 2023.

\bibitem{hayakawa1973nonexistence}
K. Hayakawa. \textit{On Nonexistence of Global Solutions of Some Semilinear Parabolic Differential Equations}. Proceedings of the Japan Academy, \textbf{49}(7), 503--505. 1973.

\bibitem{kobayashi1977growing}
K. Kobayashi, T. Sirao, H. Tanaka. \textit{On the growing up problem for semilinear heat equations}. Journal of the Mathematical Society of Japan, \textbf{29}(3), 407--424. 1977.

\bibitem{levine1990role}
H. A. Levine. \textit{The Role of Critical Exponents in Blow-up Theorems}. SIAM Review, \textbf{32}(2), 262--288. 1990.

\bibitem{lin2017existence}
Y. Lin, Y. Wu. \textit{The existence and nonexistence of global solutions for a semilinear heat equation on graphs}. Calculus of Variations and Partial Differential Equations, \textbf{56}, 102. 2017.

\bibitem{yong2018blow}
Y. Lin, Y. Wu. \textit{Blow-up problems for nonlinear parabolic equations on locally finite graphs}. Acta Mathematica Scientia, \textbf{38}(3), 843--856. 2018.

\bibitem{lin2021heat}
Y. Lin, Y. Yang. \textit{A heat flow for the mean field equation on a finite graph}. Calculus of Variations and Partial Differential Equations, \textbf{60}(6), 206. 2021.

\bibitem{mastrolia2017nonexistence}
P. Mastrolia, D. D. Monticelli, F. Punzo. \textit{Nonexistence of solutions to parabolic differential inequalities with a potential on Riemannian manifolds}. Mathematische Annalen, \textbf{367}, 929--963. 2017.

\bibitem{meglioli2020blow}
G. Meglioli, F. Punzo. \textit{Blow-up and global existence for solutions to the porous medium equation with reaction and slowly decaying density}. Journal of Differential Equations, \textbf{269}(10), 8918--8958. 2020.

\bibitem{darioelliptic}
D. D. Monticelli, F. Punzo, J. Somaglia. \textit{Nonexistence results for semilinear elliptic equations on weighted graphs}. arXiv preprint, 2023. \url{https://arxiv.org/abs/2306.03609}.

\bibitem{dariopara}
D. D. Monticelli, F. Punzo, J. Somaglia. \textit{Nonexistence of solutions to parabolic problems with a potential on weighted graphs}. arXiv preprint, 2024. \url{https://arxiv.org/abs/2404.12058}.

\bibitem{pohozaev2000blow}
S. I. Pohozaev, A. Tesei. \textit{Blow-up of nonnegative solutions to quasilinear parabolic inequalities}. Atti della Accademia Nazionale dei Lincei, Classe di Scienze Fisiche, Matematiche e Naturali, Rendiconti Lincei Matematica E Applicazioni, \textbf{11}(2), 99--109. 2000.

\bibitem{punzo2012blow}
F. Punzo. \textit{Blow-up of solutions to semilinear parabolic equations on Riemannian manifolds with negative sectional curvature}. Journal of Mathematical Analysis and Applications, \textbf{387}(2), 815--827. 2012.

\bibitem{punzo2021global}
F. Punzo. \textit{Global solutions of semilinear parabolic equations with drift term on Riemannian manifolds}. Discrete and Continuous Dynamical Systems, \textbf{42}(8), 3733--3746. 2022.

\bibitem{SGKM}
A. A. Samarskii, V. A. Galaktionov, S. P. Kurdyumov, A. P. Mikhailov. \textit{Blow-up in Quasilinear Parabolic Equations}. De Gruyter Expositions in Mathematics, \textbf{19}, 1995.

\bibitem{vazquez2007porous}
J. L. Vázquez. \textit{The Porous Medium Equation: Mathematical Theory}. Oxford University Press, 2007.

\bibitem{wang2016asymptotic}
Z. Wang, J. Yin. \textit{Asymptotic behaviour of the lifespan of solutions for a semilinear heat equation in hyperbolic space}. Proceedings of the Royal Society of Edinburgh Section A: Mathematics, \textbf{146}(5), 1091--1114. 2016.

\bibitem{wu2018nonexistence}
Y. Wu. \textit{On nonexistence of global solutions for a semilinear heat equation on graphs}. Nonlinear Analysis, \textbf{171}, 73--84. 2018.

\bibitem{zhang1999blow}
Q. S. Zhang. \textit{Blow-up results for nonlinear parabolic equations on manifolds}. Duke Mathematical Journal, \textbf{97}(3), 515--539. 1999.

\end{thebibliography}
\end{document}